\newtheorem{lemma}[equation]{Lemma}
\newtheorem{theorem}[equation]{Theorem}
\newtheorem{definition}[equation]{Definition}
\newtheorem{conj}[equation]{Conjecture}
\newtheorem{cory}[equation]{Corollary}
\journal{a Journal}
\begin{document}

\begin{frontmatter}



\title{Some Exceptional Beauville Structures}


\author{Ben Fairbairn}

\address{Department of Economics, Mathematics and Statistics, Birkbeck, University of London, Malet Street, London, WC1E 7HX 

 bfairbairn$@$ems.bbk.ac.uk}

\begin{abstract}
We first show that every quasisimple sporadic group, including the Tits group $^2F_4(2)'$, possesses an unmixed strongly real Beauville structure aside from the Mathieu groups $M_{11}$ and $M_{23}$ (and possibly 2\.{}$\mathbb{B}$ and $\mathbb{M}$). We go on to show that no almost simple sporadic group possesses a mixed Beauville structure. We go on to use the exceptional nature of the alternating group $A_6$ to give a strongly real Beauville structure for this group explicitly correcting an earlier error of Fuertes and Gonz\'{a}lez-Diez. In doing so we complete the classification of alternating groups that possess strongly real Beauville structures. We conclude by discussing mixed Beauville structures of the groups $A_6:2^{(2)}$.
\end{abstract}

\begin{keyword}
Beauville structure, Beauville surface, sporadic simple group

\MSC 20D08 \sep 20C15


\end{keyword}

\end{frontmatter}


\section{Introduction}

\subsection{Beauville Surfaces, Structures and Groups}

Complex surfaces lie in the intersection of algebraic geometry, differential geometry and complex variable theory and as such enjoy applications as far afield as number theory, topology and even superstring theory. Finding examples of such surfaces that are easy to work with is thus more important than ever. One approach that has proved particularly fruitful over the past ten years or so is the concept of a \emph{Beauville surface}: a class of 2-dimensional complex algebraic varieties that
are rigid, in the sense of admitting no non-trivial deformations, whose study was recently initiated by Bauer, Catanese and Grunewald in \cite{2, 3, 5}. These surfaces are defined over the field $\overline{\mathbb{Q}}$ of
algebraic numbers, providing a geometric action of the absolute Galois group $Gal(\overline{\mathbb{Q}}/\mathbb{Q})$.
By generalizing Beauville's original example \cite[p.159]{4}, they can be constructed from finite
groups acting on suitable pairs of algebraic curves, and here we give some new examples
of surfaces of this kind.

\begin{definition}
A \emph{Beauville surface of unmixed type} is a compact complex surface $\mathcal{S}$ such that

(a) $\mathcal{S}$ is isogenous to a higher product, that is, $\mathcal{S}\cong(\mathcal{C}_1\times\mathcal{C}_2)/G$ where $\mathcal{C}_1$ and $\mathcal{C}_2$ are algebraic curves of genus at least 2 and $G$ is a finite group acting by the diagonal action freely on $\mathcal{C}_1\times\mathcal{C}_2$ by holomorphic transformations; 

(b) If $G_0<G$ denotes the subgroup consisting of the elements which preserve each of the
factors, then $G_0$ acts effectively on each curve $\mathcal{C}_i$ so that $\mathcal{C}_i/G_0\cong
\mathbb{P}^1$ and $\mathcal{C}_i\rightarrow\mathcal{C}_i/G_0$ ramifies over three points.
\end{definition} 

Condition (b) is equivalent to
each curve $\mathcal{C}_i$ admitting a regular dessin in the sense of the theory of dessin
d'enfants due to Grothendieck  \cite{6, 11, 23}, or equivalently an orientably regular hypermap \cite{14}, with $G$ acting as
the orientation-preserving automorphism group.

One particularly attractive feature of this class of curves is the fact that the above definition can be translated into more finitery combinatorial terms that `internalize' the structure of the surface into the group $G$ in the following way.

\begin{definition}\label{maindef}
Let $G$ be a group. An \emph{unmixed Beauville structure} of $G$ is a pair of generating sets $\{x_i,y_i,z_i\}\subset G$ with $l_i:=o(x_i)$, $m_i:=o(y_i)$ and $n_i:=o(z_i)$ for $i=1,2$ such that the following holdsfor $i=1,2$.
\begin{enumerate}
\item $x_iy_iz_i = 1$;
\item $l_i^{-1} + m^{-1}_i + n^{-1}_i < 1$ and
\item Defining
$$\Sigma(x_i,y_i,z_i):=\bigcup_{g\in G}\bigcup_{j=0}^{\infty}\{gx_i^jg^{-1},gy_i^jg^{-1},gz_i^jg^{-1}\}$$
we have
$$\Sigma(x_1,y_1,z_1)\cap\Sigma(x_2,y_2,z_2)=\{e\}.$$
We say that the Beauville structure has \emph{type} $((l_1,m_1,n_1),(l_2,m_2,n_2))$.  We call a group possessing an unmixed Beauville structure a \emph{Beauville group}.
\end{enumerate}
\end{definition}

It was conjectured by Bauer, Catanese and Grunewald that every nonabelian finite simple group is a Beauville group, with the sole exception of the alternating group A$_5$ \cite[Conjecture 1]{3}. Several authors settled special cases of this conjecture \cite{3,FG,FJ,GP}. Finally the full conjecture was recently verified by the author, Magaard and Parker in \cite[theorem 1.3]{FMP} where we prove the following more general theorem.

\begin{theorem}\label{FMPref}
Aside from the groups SL$_2$(5) and PSL$_2(5)(\cong$A$_5)$ every nonabelian finite quasisimple group possesses an unmixed Beauville structure.
\end{theorem}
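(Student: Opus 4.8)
The plan is to prove the theorem by working through the classification of finite simple groups and, for each quasisimple group $G$, exhibiting two hyperbolic generating triples whose type-tuples are coprime. The basic reduction is the elementary observation that if $(x_1,y_1,z_1)$ and $(x_2,y_2,z_2)$ are triples of elements of $G$ with $x_iy_iz_i=1$, $\langle x_i,y_i\rangle=G$ and $l_i^{-1}+m_i^{-1}+n_i^{-1}<1$ for $i=1,2$, and if in addition $\gcd(l_1m_1n_1,\,l_2m_2n_2)=1$, then $\{x_i,y_i,z_i\}$ is an unmixed Beauville structure of $G$: every non-identity element of $\Sigma(x_i,y_i,z_i)$ has order dividing one of $l_i,m_i,n_i$, so coprimality of the type-tuples forces $\Sigma(x_1,y_1,z_1)\cap\Sigma(x_2,y_2,z_2)=\{e\}$. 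The problem thus becomes the essentially generation-theoretic task of producing, for all but the two listed groups, such a coprime pair of hyperbolic generating triples (with a handful of small groups, where coprimality is genuinely impossible, instead disposed of by a direct check on the relevant conjugacy classes).

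For alternating groups $A_n$ with $n$ not too small one writes down explicit pairs of permutations: choosing supports and cycle types so that the orders arising in the two triples are coprime is routine once $n$ is large enough, and this had largely been carried out by earlier authors; the covering groups $2.A_n$, together with the exceptional-multiplier cases $3.A_6$, $6.A_6$, $3.A_7$, $6.A_7$, need the extra step of checking that suitable preimages still generate, remain hyperbolic, and have element orders that preserve coprimality. For the sporadic groups and their quasisimple covers one argues computationally from \textsc{Atlas} data, certifying generation via structure constants together with knowledge of the maximal subgroups; this is the part of the statement re-examined (in the strongly real setting) in the present paper, and for plain Beauville structures it was already known for almost all of the relevant groups.

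The substantial case is the groups of Lie type. For $G$ of Lie type over $\mathbb{F}_q$ with $q=p^f$, one assembles the two triples from judiciously chosen semisimple elements and occasionally a unipotent element of order $p$. The central tool is Zsigmondy's theorem: for suitable exponents $k$ there is an element whose order is a primitive prime divisor of $q^k-1$, such an element lies in a small maximal torus, and it generates $G$ together with an appropriate partner; by basing the first triple on primitive prime divisors of $q^{a}\pm1$ and the second on those of $q^{b}\pm1$ for different $a,b$ one secures coprimality of the two type-tuples. Generation is confirmed using known $(l,m,n)$-generation results, structure-constant counts, and the classification of maximal subgroups to exclude proper overgroups of the chosen pair. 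The finitely many configurations where Zsigmondy primes fail (such as $q^6-1$ with $q=2$ and the Mersenne/Fermat exceptions at $k=1,2$), the Suzuki and Ree families, and the very small rank groups $\mathrm{PSL}_2(q)$, $\mathrm{PSL}_3(q)$, $\mathrm{PSU}_3(q)$ --- which have too rigid a torus structure for the uniform argument --- all require separate, partly computational, treatment.

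The main obstacle is the passage from a simple group $\bar{G}=G/Z(G)$ to a quasisimple cover $G$, together with the residue of genuinely small cases. A generating triple of $\bar{G}$ need not lift to a generating triple of $G$ of the same type: element orders may acquire factors of $|Z(G)|$, destroying the coprimality the argument relies on, and a lift of a generating pair may fail to generate $G$ even though its image generates $\bar{G}$, so one must control Schur multipliers case by case (the linear, unitary and exceptional-multiplier groups being the delicate ones). The final irreducible obstruction is that $\mathrm{PSL}_2(5)\cong A_5$ and its double cover $\mathrm{SL}_2(5)$ simply do not possess enough suitable classes for any pair of hyperbolic triples to be simultaneously generating and $\Sigma$-disjoint --- these are the two true exceptions --- while a short further list of small groups (low rank over small fields, $A_6$ and its covers, and so on) survives only after explicit machine computation in place of the uniform argument. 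Organising the Lie-type analysis so that everything outside an explicitly enumerable exceptional set is handled uniformly, and then clearing that set by hand and by computer, is where the bulk of the work lies.
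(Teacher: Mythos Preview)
The paper does not contain a proof of this theorem at all: it is stated as a result imported from \cite{FMP} (Fairbairn, Magaard and Parker), with the paper explicitly saying ``the full conjecture was recently verified by the author, Magaard and Parker in \cite[theorem 1.3]{FMP}''. So there is no proof in this paper for your proposal to be compared against; the theorem functions here purely as background, and the present paper's own contributions concern the \emph{strongly real} and \emph{mixed} cases for sporadic groups and $A_6$.

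That said, your sketch is a fair high-level description of the strategy actually used in \cite{FMP} and in the parallel work of Guralnick--Malle \cite{GM} and Garion--Larsen--Lubotzky \cite{GLL}: reduce to producing two hyperbolic generating triples, use coprimality of type-tuples as a sufficient criterion for $\Sigma$-disjointness wherever possible, manufacture such triples for Lie-type groups from maximal tori via Zsigmondy primes, handle alternating and sporadic groups and small-rank/small-field Lie-type groups separately, and track the passage to covers. One point worth flagging is that your remark ``a lift of a generating pair may fail to generate $G$ even though its image generates $\bar{G}$'' is false for quasisimple $G$: since $G$ is perfect and $Z(G)$ is central, any set whose image generates $G/Z(G)$ already generates $G$ (this is exactly the content of the paper's Lemma following \cite[lemma 4.1]{FJ}). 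The genuine issue in lifting is only the first one you mention, namely that element orders may pick up central factors and spoil coprimality or hyperbolicity.
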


Similar results were obtained at around the same time by Guralnick and Malle in \cite{GM} and by Garion, Larsen and Lubotzky in \cite{GLL}.

\subsection{Real Surfaces and Mixed Structures}

Now that we know that almost every quasisimple group is a Beauville group, we are in a position to address the more general issue of \emph{what these Beauville structures and surfaces actually look like}. One more specific instance of this somewhat vague question is to ask when a complex surface $\mathcal{S}$ is `real' (ie there is a biholomorphic map $\sigma:\mathcal{S}\rightarrow\overline{\mathcal{S}}$ such that $\sigma^2$ is the identity). As is the `zeitgeist' of Beauville constructions, this topological property can be translated into finitery combinatorial terms inside the corresponding group.

\begin{definition}
We say that a Beauville surface is \emph{real} and its corresponding Beauville structure $\{x_i,y_i,z_i|i=1,2\}$ and group $G$ are \emph{strongly real} if there exist automorphisms $\phi_i\in Aut(G)$ for $i=1,2$ that differ only in an inner automorphism of $G$ such that $x_i^{\phi_i}=x_i^{-1}$ and $y_i^{\phi_i}=y_i^{-1}$ for $i=1,2$. If such an automorphism exists we say that $G$ is \emph{strongly} $(l_i,m_i,n_i)$ \emph{generated}.
\end{definition}

It has been conjectured by Bauer, Catanese and Grunewald \cite[Conjecture 3]{2} that all but finitely many of the finite simple groups are strongly real Beauville groups. Given the progress made on the wider class of quasisimple groups in theorem \ref{FMPref}, it seems natural to make following stronger conjecture.

\begin{conj}
All but finitely many finite quasisimple groups possess strongly real unmixed Beauville structures.
\end{conj}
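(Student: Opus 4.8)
The plan is to proceed via the classification of finite simple groups, first reducing the quasisimple case to that of the simple quotient together with control of the induced action on the centre: one wants the inverting automorphism to act by inversion on $Z(G)$, which is automatic once it inverts generators mapping onto a generating set of $G/Z(G)$. With that reduction in place, the sporadic groups and the Tits group are dealt with in the present paper, and the alternating groups follow from the older work of Fuertes and González-Diez together with the correction for $A_6$ given below, so the whole weight of the conjecture falls on the groups of Lie type. I would split these into classical groups of large rank, classical groups of bounded rank (which behave much like the sporadic cases and can be dispatched by explicit constructions or direct computation, modulo the finitely many exceptions the conjecture permits), and the exceptional groups of Lie type.

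For the generic classical families the strategy is to upgrade the unmixed Beauville structures produced in Theorem~\ref{FMPref}. There one exhibits, for a quasisimple $G$, two generating triples $(x_i,y_i,z_i)$ with $x_iy_iz_i=1$, orders satisfying the hyperbolicity condition $l_i^{-1}+m_i^{-1}+n_i^{-1}<1$, and the crucial disjointness $\Sigma(x_1,y_1,z_1)\cap\Sigma(x_2,y_2,z_2)=\{e\}$ of Definition~\ref{maindef}, the last typically forced by arranging the two types to involve pairwise coprime element orders. To make such a structure strongly real it suffices to produce a \emph{single} automorphism $\phi\in\mathrm{Aut}(G)$ with $x_i^\phi=x_i^{-1}$ and $y_i^\phi=y_i^{-1}$ for $i=1,2$, since then $\phi_1=\phi_2=\phi$ differ by the trivial inner automorphism. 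One therefore chooses the generators inside a subgroup on which a known ambivalence-producing automorphism acts by inversion — the inverse-transpose graph automorphism for linear and unitary groups, or a suitable inner or field automorphism realising the real (respectively strongly real) structure of the relevant semisimple and unipotent classes. Since large families of classes in classical groups are known to be real or strongly real, the problem reduces to a combinatorial one: selecting $x_i,y_i$ from such classes while retaining generation of $G$ and the coprimality of orders needed for the $\Sigma$-condition.

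The exceptional groups would be handled family by family — $^2B_2(q)$, $^2G_2(q)$, $G_2(q)$, $^3D_4(q)$, $^2F_4(q)$, $F_4(q)$, $E_6(q)$, $^2E_6(q)$, $E_7(q)$, $E_8(q)$ — using their maximal-subgroup structure and character-theoretic data to locate strongly real generating triples of coprime orders. The Suzuki and small Ree groups are the most delicate, because their element orders are severely constrained, but they are already known to be strongly real Beauville groups in special cases, which suggests that a uniform argument is within reach.

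The hard part, and the reason the statement is only conjectural, is the uniformity over classical groups of unbounded rank: one must simultaneously (i) guarantee that the chosen pair of real elements generates a group whose rank grows, (ii) keep the element orders of the two triples disjoint enough for $\Sigma(x_1,y_1,z_1)\cap\Sigma(x_2,y_2,z_2)=\{e\}$, and (iii) ensure the inverting automorphism acts correctly on \emph{both} triples at once. Each of these is tractable in isolation, but reconciling all three across every family at once — while keeping an honest account of the finitely many genuine exceptions — is precisely the obstruction, and is likely to demand the same kind of painstaking case analysis, supplemented by explicit computation in the bounded-rank cases, that was needed to establish Theorem~\ref{FMPref} itself.
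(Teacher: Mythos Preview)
The statement you have been asked to prove is a \emph{conjecture} in the paper, not a theorem: the paper offers no proof of it and does not claim one. Immediately after stating it, the author remarks that settling the sporadic groups ``makes no impact whatsoever on this conjecture, since there are only finitely many sporadic groups''; the paper then proves Theorem~\ref{maintheorem} (the sporadic case, with M$_{11}$, M$_{23}$ excluded and $2\cdot\mathbb{B}$, $\mathbb{M}$ left open) and Theorem~\ref{alt} (the $A_6$ correction), nothing more. So there is no ``paper's proof'' for your proposal to be compared against.

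Your write-up is not a proof either, and to your credit you say so: the final paragraph explicitly identifies the obstruction (simultaneous control of generation, $\Sigma$-disjointness, and a common inverting automorphism across classical families of unbounded rank) as ``the reason the statement is only conjectural''. What you have produced is a plausible programme --- reduce from quasisimple to simple via the centre, invoke the present paper and \cite{FG} for the sporadic and alternating cases, and attack the Lie-type families using the methods behind Theorem~\ref{FMPref} upgraded by a well-chosen inverting automorphism. That is a reasonable roadmap, but it is not a proof, and one small gap even at the level of the outline is that the alternating case for \emph{quasisimple} groups is not covered by \cite{FG} and Theorem~\ref{alt} alone: the Schur covers $2\cdot A_n$ (and the exceptional covers of $A_6$, $A_7$) would still need to be handled.
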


Clearly settling the status as strongly real Beauville groups of the sporadic groups makes no impact whatsoever on this conjecture, since there are only finitely many sporadic groups. (Note that throughout we shall use the term \emph{sporadic group} to refer to the 26 `traditional' sporadic groups as well as the Tits group $^2F_4(2)'$.) Nonetheless it is the opinion of the author that settling these questions for the sporadic groups is no less important than, for example, determining their symmetric genii or determining which of them are Hurwitz groups \cite{ConderWW,MonsterHur}. It is, however, arguably \emph{more} useful to settle this matter for the sporadic groups since curves and surfaces associated with them are likely to be very exceptional in nature and much of the original motivation for the study of Beauville surfaces was for their use as counterexamples \cite{2}.

\begin{theorem}\label{maintheorem}
(a) The Mathieu groups M$_{11}$ and M$_{23}$ are not strongly real Beauville groups.

(b) Every other quasisimple sporadic group (except possibly the groups 2\.{}$\mathbb{B}$ and $\mathbb{M}$) is a strongly real Beauville group.
\end{theorem}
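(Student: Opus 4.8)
The plan is to prove each part by explicit computation with the character tables and conjugacy class data available in the ATLAS, combined with the standard structure constant machinery for counting generating triples. For part (a), I would argue non-existence by a counting argument: recall that a strongly real Beauville structure requires two generating triples of hyperbolic type whose conjugacy-class sets $\Sigma$ intersect trivially, and each triple must be strongly real (inverted by an automorphism). For $\mathrm{M}_{11}$ and $\mathrm{M}_{23}$ the set of element orders is small, and every maximal subgroup and every generating triple can be enumerated. The key obstruction to build from is that in these two groups there are very few classes of elements of each order, and in particular some orders that would be needed to keep the two $\Sigma$-sets disjoint simply do not give rise to generating triples of hyperbolic type; one shows that any two hyperbolic generating triples must share a common element order (indeed a common class), so condition (3) of Definition \ref{maindef} fails. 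I would make this precise using the list of element orders ($\mathrm{M}_{11}$ has orders $1,2,3,4,5,6,8,11$ and $\mathrm{M}_{23}$ has orders $1,2,3,4,5,6,7,8,11,14,15,23$) together with the fact that triples of type $(p,p,p)$ for the large primes cannot be avoided.

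For part (b), the strategy is the standard positive construction: for each remaining quasisimple sporadic group $G$, exhibit two explicit triples $(x_1,y_1,z_1)$ and $(x_2,y_2,z_2)$ of suitable hyperbolic types, verify that each generates $G$, verify the disjointness condition $\Sigma(x_1,y_1,z_1)\cap\Sigma(x_2,y_2,z_2)=\{e\}$, and verify strong reality. I would choose the types so that the two triples use element orders from disjoint sets whenever possible — this makes condition (3) automatic — and otherwise check at the level of conjugacy classes using power maps from the ATLAS. Generation is checked either directly in {\sc GAP} or {\sc Magma} using the stored representations, or by the structure-constant formula $\frac{|G|}{|C_G(x)||C_G(y)||C_G(z)|}\sum_\chi \frac{\chi(x)\chi(y)\chi(z)}{\chi(1)}$ being positive and then subtracting off the contributions of non-generating subgroups (using the list of maximal subgroups). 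Strong reality is handled by choosing, wherever possible, classes that are \emph{real} and \emph{rational} (so that $x$ and $x^{-1}$ are conjugate and the triple can be inverted by conjugation), invoking the well-known fact that for most of these groups every class, or enough classes, are real; when $G$ has an outer automorphism of order $2$ one may instead use it to realize the inversion, and for the genuinely quasisimple covers one lifts an inverting automorphism from the simple quotient and adjusts by the central involution.

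The main obstacle, and the reason $2\.\mathbb{B}$ and $\mathbb{M}$ are excepted, is computational: for the very large groups one cannot simply run a random search for generating triples, and even the structure-constant computation requires the full character table, which for these two groups is unwieldy for the repeated subgroup-sieving needed to confirm generation. So the bulk of the work is a finite but lengthy case analysis over the roughly two dozen remaining groups and their double (and other) covers, each requiring: selection of two types, a generation check, a $\Sigma$-disjointness check via power maps, and a strong-reality check. I expect the fiddliest cases to be the small Mathieu and Leech-lattice groups whose covers have small centres interacting with the real/rational distinction ($\mathrm{M}_{12}$, $2\.\mathrm{M}_{12}$, $\mathrm{M}_{22}$ and its covers, $3\.\mathrm{M}_{22}$ etc.), where one must be careful that an automorphism inverting the images in the simple group actually lifts to one inverting the chosen elements of the cover. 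Groups like $\mathrm{J}_1$ or $\mathrm{M}_{24}$ with no outer automorphism but with all classes real should be routine. Throughout I would organize the verification group-by-group, presenting for each a short table listing the two chosen types and the classes used, and citing the ATLAS for the power-map and maximal-subgroup data that make the $\Sigma$-disjointness and generation checks go through.
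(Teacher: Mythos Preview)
Your plan has genuine gaps in both parts.

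For part~(a), the argument as you state it would show that $\mathrm{M}_{11}$ and $\mathrm{M}_{23}$ are not Beauville groups at all --- you write that ``any two hyperbolic generating triples must share a common element order'' --- but this is false: both groups \emph{are} Beauville groups by Theorem~\ref{FMPref}, and listing all element orders (including $8,11,23,\dots$) is beside the point. The obstruction is specific to the \emph{strongly real} condition. Since $\mathrm{Out}(\mathrm{M}_{11})=\mathrm{Out}(\mathrm{M}_{23})=1$, the inverting automorphism must be inner; and because $\langle x_i,y_i\rangle=G$ is simple, the inverting element squares into $Z(G)=1$, so it is an involution. Hence every element appearing in a strongly real triple must lie in a \emph{strongly real conjugacy class} (inverted by an involution). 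By Suleiman's classification these classes in $\mathrm{M}_{11}$ and $\mathrm{M}_{23}$ all have order at most~$6$, and there is a unique class of each of the orders $2$, $3$, $5$. Only after this reduction does a short computer check finish the job: the groups are strongly $(5,5,m)$-generated only for $m\in\{4,6\}$ and never strongly $(3,3,m)$-generated, so two $\Sigma$-disjoint strongly real triples cannot coexist. Your sketch never isolates the strongly-real-class constraint, and the sentence about ``triples of type $(p,p,p)$ for the large primes'' points in exactly the wrong direction --- those primes are unavailable, not unavoidable.

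For part~(b), the step ``choose real/rational classes so that $x$ and $x^{-1}$ are conjugate and the triple can be inverted by conjugation'' does not suffice. Having $x$, $y$, $z$ individually real does \emph{not} give a single automorphism inverting the whole triple: you need one $\phi$ with $x^\phi=x^{-1}$ and $y^\phi=y^{-1}$ simultaneously, and the structure-constant formula does not count such triples. The paper builds this in from the start: fix an involution $t$, set $x_i:=t\,t^{g_i}$ (automatically inverted by $t$), use Bray's centraliser trick to produce $u\in C_G(t)$ not normalising $\langle x_i\rangle$, and put $y_i:=(x_i^{j(i)})^u$; since $u$ commutes with $t$, conjugation by $t$ inverts $y_i$ as well. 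This single device reduces the existence problem to a search for words in the standard generators, tabulated once and for all, with generation then certified via the maximal-subgroup lists for the large groups and the structures pushed through the covers via the lifting lemmas of Fuertes--Jones. Without such a mechanism your case-by-case ``strong reality check'' is not a routine verification, and many of the triples your structure-constant count produces will simply fail it.
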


Our computationally intensive methods are unable to handle the groups 2\.{}$\mathbb{B}$ and $\mathbb{M}$ (though we are able to show that the simple group $\mathbb{B}$ is a strongly real Beauville group). We make no apologies for this: to resolve the similar problem of settling the Monster's status as a Hurwitz group took almost ten years of computing time \cite[p.370]{MonsterHur}! Nonetheless, the vast majority of the conjugacy classes in both of these groups are strongly real (see \cite{Sul}) and since the only problem the groups M$_{11}$ and M$_{23}$ encounter is a lack of strongly real classes (see Section \ref{non}) we make the following conjecture. 

\begin{conj}
Both of the groups 2\.{}$\mathbb{B}$ and $\mathbb{M}$ are strongly real Beauville groups. 
\end{conj}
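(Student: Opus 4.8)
We outline a possible route to a proof. Since the outer automorphism groups of $\mathbb{M}$ and $2\.{}\mathbb{B}$ are trivial, a triple $(x_i,y_i,z_i)$ with $x_iy_iz_i=1$ is part of a strongly real structure precisely when some element of the group inverts both $x_i$ and $y_i$ by conjugation; in particular all three classes must be real, and the ``differ only in an inner automorphism'' clause is then automatic. The plan is therefore to run the same programme used for the other sporadic groups, but relying only on the known character tables and the known lists of maximal subgroups rather than on direct computation inside the groups themselves.

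Concretely, for each of the two groups one chooses two hyperbolic types $((l_1,m_1,n_1),(l_2,m_2,n_2))$ consisting of real classes and with $\gcd(\mathrm{lcm}(l_1,m_1,n_1),\mathrm{lcm}(l_2,m_2,n_2))=1$, so that condition~(3) of Definition~\ref{maindef} is automatic --- for instance using real classes whose element orders are supported on disjoint sets of large primes, of which both groups have an abundance. For each type one then (i) uses the character table to compute the relevant class structure constant and confirm that triples with product $1$ exist; (ii) shows that such a triple can be chosen to generate the whole group, by checking against the list of maximal subgroups that no maximal subgroup meets all three classes, or, failing that, that the structure constant exceeds the sum of the corresponding counts over the finitely many offending maximal subgroups; and (iii) shows that such a triple can be chosen strongly real, by bounding below the number of triples admitting a common inverting involution (via a character count of conjugate pairs inverted by a fixed involution class, or by placing the generators inside a controllable subgroup of the shape $D_{2l}*D_{2m}$) and comparing with the counts of non-generating or non-real configurations. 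For $2\.{}\mathbb{B}$ one can shorten (i)--(iii) by working relative to the strongly real Beauville structures already known on $\mathbb{B}$ (Theorem~\ref{maintheorem}): lifting a strongly real generating triple of $\mathbb{B}$ through the double cover leaves only a factor-of-two obstruction in the centre --- governing whether the lifted relation stays in classes of the intended orders and whether the lifted inverting element inverts exactly rather than only modulo the centre --- and one shows this obstruction can be made to vanish by a suitable choice among the many strongly real structures of $\mathbb{B}$, or reads it off the known character table of $2\.{}\mathbb{B}$ directly.

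The main obstacle, and the reason these two groups remain open, is that for $\mathbb{M}$ (and to a lesser extent $2\.{}\mathbb{B}$) the group is far too large for steps (ii) and (iii) to be carried out by direct computation, so generation and strong reality must be forced from the character table and subgroup data alone; the ``subtract the maximal subgroups'' step is hardest exactly when the chosen classes have small centralisers, which is precisely the situation one is driven into by demanding disjoint prime supports. A clean resolution would most plausibly require either a new lemma deducing strongly real generation from the size of a real structure constant, or a feasible computation of the relevant permutation characters of the Monster. Given how much slack the data leave --- almost every class of each group is strongly real \cite{Sul} --- both obstructions should ultimately be surmountable, which is the content of the conjecture.
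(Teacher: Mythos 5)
The statement you are asked to prove is stated in the paper as a \emph{conjecture}: the author explicitly says his computational methods cannot handle $2\cdot\mathbb{B}$ and $\mathbb{M}$, and offers no proof, only (for $\mathbb{M}$, in Section 5) a heuristic programme. Your proposal likewise is not a proof. The decisive steps --- your (ii) and (iii), i.e.\ exhibiting a triple in each of two coprime hyperbolic types that simultaneously generates the group and is inverted by a common involution --- are exactly the content of the conjecture, and you defer them to computations or to ``a new lemma deducing strongly real generation from the size of a real structure constant'' that you do not supply. Counting real triples via structure constants is routine from the character tables, but forcing a \emph{common} inverting involution together with generation is not a character-table computation in any currently known form (it amounts to controlling products of involutions landing in prescribed classes while avoiding all maximal subgroups, and for $\mathbb{M}$ the maximal subgroups are not even fully classified, so the ``subtract the maximal subgroups'' sieve cannot be completed unconditionally). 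Your lifting argument for $2\cdot\mathbb{B}$ has the same status: the ``factor-of-two obstruction'' (orders doubling, and the inverting element possibly inverting only modulo the centre) is identified but not shown to vanish. So there is a genuine gap: what you have written is a research plan, not a proof, and in that respect it matches the paper, which also leaves the statement open.

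It is worth noting that your plan differs in flavour from the paper's own sketch for $\mathbb{M}$. The paper works concretely inside Wilson's Hurwitz generating pair $g,h$ (with $g$ in class 2B), uses the words $p,q,r,s$ whose orders are known and several of which are inverted by conjugation by $g$, obtains a Bray-type element commuting with $g$ from $p^{21}$, and proposes to prove generation not by a maximal-subgroup sieve over all classes but by the Norton--Wilson classification of maximals containing 2A-elements: a word of order 94 forces containment in a copy of $2\cdot\mathbb{B}$, and a second word of order 41 rules that out. That route sidesteps the incompleteness of the maximal subgroup list, which is precisely where your character-theoretic sieve is weakest; its bottleneck is instead raw computational power for multiplying and taking orders of words in $\mathbb{M}$. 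Your structure-constant approach, if the counting lemma you postulate could be proved, would be less dependent on explicit matrix computation, but as it stands neither route closes the conjecture.
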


In the case of $\mathbb{M}$ we make several specific conjectural remarks concerning how a strongly real Beauville structure for $\mathbb{M}$ might be obtained in Section 5 - the problem is not a lack of theoretical ideas or knowledge about the monster, but simply a lack of computational power!

A mixed Beauville structure is a Beauville structure in which the action of our group interchanges the two curves defining our surface. This can also be `internalized' into $G$, though we postpone this definition until Section 6 where we prove the following theorem.

\begin{theorem}\label{unmixed}
Let $G$ be an almost simple group such that the derived subgroup $[G,G]$ is sporadic. Then $G$ does not possess an mixed Beauville structure.
\end{theorem}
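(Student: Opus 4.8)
The plan is to extract from the definition of a mixed Beauville structure a simple parity obstruction and then to check that no almost simple group with sporadic socle can overcome it. Recall that a mixed Beauville structure on a group $G$ (the definition is given in Section~6, following Bauer--Catanese--Grunewald) involves a subgroup $G^0$ of index $2$, a generating triple $(a,b,c)$ of $G^0$ with $abc=e$ satisfying the hyperbolicity condition, and an element $\tau\in G\setminus G^0$ such that $(\gamma\tau)^2\notin\Sigma(a,b,c)$ for every $\gamma\in G^0$; this last requirement is exactly the translation of the freeness, along the factor-swapping coset, of the $G$-action on $\mathcal{C}_1\times\mathcal{C}_2$. Since $e\in\Sigma(a,b,c)$ always, and since $\gamma\mapsto\gamma\tau$ sweeps out all of $G\setminus G^0$ as $\gamma$ ranges over $G^0$, this immediately forces: \emph{the coset $G\setminus G^0$ contains no involution}. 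This is the obstruction I would use.

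It remains to show that the two structural possibilities for $G$ both fall foul of it. Put $S:=[G,G]$, which is sporadic, so $S\le G\le\mathrm{Aut}(S)$. If $G=S$ then $G$ is non-abelian simple, so it has no subgroup of index $2$ (such a subgroup would be normal, forcing $|G|=2$); hence there is no admissible $G^0$ and $G$ has no mixed Beauville structure. Otherwise $\mathrm{Out}(S)\ne 1$, and since every sporadic group (the Tits group included) has $\mathrm{Out}(S)\cong\mathbb{Z}_2$, we get $G=\mathrm{Aut}(S)=S.2$. Now $G/[G,G]=G/S\cong\mathbb{Z}_2$, and subgroups of index $2$ correspond to those of the abelianisation, so $S$ is the unique index-$2$ subgroup of $G$; hence $G^0=S$ is forced.

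The proof then finishes by quoting the ATLAS: for every sporadic group $S$ with $\mathrm{Out}(S)\cong\mathbb{Z}_2$ the outer coset $\mathrm{Aut}(S)\setminus S$ contains an involution (these classes are displayed explicitly in the character tables). This contradicts the parity obstruction of the first paragraph, so no mixed Beauville structure exists. I expect the only genuinely delicate point to be the first step --- reading off from the definition precisely which coset, and which squares in it, the freeness condition constrains --- together with the (easy but indispensable) observation that $G^0$ must equal $S$; the ATLAS verification in the last step is then routine.
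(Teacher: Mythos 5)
Your parity obstruction is valid (an involution $g$ outside $G_0$ has $g^2=1\in\Sigma(x,y,z)$, violating condition 3 of the definition; this is the weak form of the lemma of Fuertes and Gonz\'{a}lez-Diez quoted in the paper), and your reduction to $G_0=[G,G]$ being the unique index-2 subgroup is fine. But the final step contains a genuine gap: your claimed ATLAS fact --- that for every sporadic $S$ with $\mathrm{Out}(S)\cong\mathbb{Z}_2$ the outer coset of $\mathrm{Aut}(S)$ contains an involution --- is false for the Tits group $^2F_4(2)'$, which this paper explicitly counts among the sporadic groups (and whose derived subgroup condition puts $^2F_4(2)$ squarely within the scope of the theorem). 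In $^2F_4(2)$ \emph{every} element outside the simple group has order divisible by $4$ (ATLAS, p.~75), so there are no outer involutions at all and the parity obstruction yields nothing. Your proof therefore covers only the twelve traditional sporadic groups with outer automorphisms and leaves the one delicate case untreated.

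For that remaining case a different argument is needed, and it is exactly where the paper does extra work: since every involution of $^2F_4(2)$ is the square of an element of order $4$ lying outside $G_0$, condition 3 forces $\Sigma(x,y,z)$ to contain no involutions, hence $x$, $y$, $z$ must have odd order, i.e.\ order $3$, $5$ or $13$; but each of these orders admits only one class of cyclic subgroups, so $\Sigma(x,y,z)\cap\Sigma(x^g,y^g,z^g)\neq\{1\}$ and condition 2 fails. You would need to add this (or an equivalent) argument to close the gap; as it stands the proposal proves the theorem only for the $26$ classical sporadic groups, not for the statement as the paper intends it.
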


Finally, our attention turns to the question of which alternating groups possess a strongly real Beauville structure and in doing so we prove the following theorem.

\begin{theorem}\label{alt}
The alternating group $A_6$ has a strongly real Beauville structure of type ((4,4,4),(5,5,5)).
\end{theorem}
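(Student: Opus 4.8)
The plan is to exhibit the two generating triples explicitly as permutations in $A_6$ and then to supply the automorphisms witnessing strong reality, the delicate part being that $A_6$ has the exceptionally large outer automorphism group $\mathrm{Out}(A_6)\cong C_2\times C_2$. First I would choose elements $x_1,y_1$ of cycle type $(4,2)$ --- the unique class of elements of order $4$ in $A_6$ --- with $\langle x_1,y_1\rangle=A_6$ and with $x_1y_1$ again of cycle type $(4,2)$; putting $z_1=(x_1y_1)^{-1}$ gives a generating triple of type $(4,4,4)$. Similarly I would choose two $5$-cycles $x_2,y_2$ with $\langle x_2,y_2\rangle=A_6$ and $x_2y_2$ a $5$-cycle, and set $z_2=(x_2y_2)^{-1}$ to obtain a triple of type $(5,5,5)$. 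Such pairs are easy to locate; the generation claims can be checked directly, the only real points to rule out being that the $5$-cycle pair does not land in the subgroup $\mathrm{PSL}_2(5)\cong A_5$ acting transitively on the six points, and that the order-$4$ pair does not land in a point stabiliser $A_5$, in an $S_4$, or in the subgroup $3^2{:}4$ --- in each case exhibiting a single element of $\langle x_i,y_i\rangle$ of an unsuitable order settles it.

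That these two triples constitute an unmixed Beauville structure is then almost automatic. Relation (1) holds by construction and relation (2) is immediate from $\frac14+\frac14+\frac14=\frac34<1$ and $\frac15+\frac15+\frac15=\frac35<1$. For relation (3) observe that every element of $\Sigma(x_1,y_1,z_1)$ is a power of an element of order $4$ and so has order $1$, $2$ or $4$, while every element of $\Sigma(x_2,y_2,z_2)$ is a power of a $5$-cycle and so has order $1$ or $5$; hence $\Sigma(x_1,y_1,z_1)\cap\Sigma(x_2,y_2,z_2)=\{e\}$.

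The real content is the strong reality, and this is where the exceptional nature of $A_6$ enters. I would exhibit automorphisms $\phi_1,\phi_2\in\mathrm{Aut}(A_6)=\mathrm{P}\Gamma\mathrm{L}_2(9)$ with $x_i^{\phi_i}=x_i^{-1}$ and $y_i^{\phi_i}=y_i^{-1}$, working in a concrete model of $\mathrm{Aut}(A_6)$ (the action on ten points, or equivalently the exceptional outer automorphism of $S_6$) so that all four cosets of $\mathrm{Inn}(A_6)$ --- the inner one, the ``$S_6$'' one, the ``$\mathrm{PGL}_2(9)$'' one and the ``$M_{10}$'' one --- are available. The subtlety, and the source of the earlier difficulty, is that inverting a pair $(x_i,y_i)$ typically forces $\phi_i$ into a particular non-inner coset, and a careless independent choice of the two triples leaves $\phi_1$ and $\phi_2$ in different cosets, so that no single inner automorphism relates them and the structure fails to be strongly real in the sense of the definition. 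The remedy is to choose the two triples in tandem so that the inverting automorphisms for both are forced into the \emph{same} coset of $\mathrm{Inn}(A_6)$, after which composing one of them with a suitable inner automorphism makes $\phi_1\phi_2^{-1}$ inner, as required.

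I expect this coset-matching to be the only real obstacle: each individual requirement (existence of the triples, the $\Sigma$-condition, existence of an automorphism inverting a given pair) is routine, but reconciling the coset constraints of the two inverting automorphisms is exactly what makes $A_6$ genuinely exceptional among the alternating groups and what the previous treatment got wrong. Having found a compatible pair of triples and their inverting automorphisms, I would finish by simply verifying the handful of resulting relations and generation statements, which can be done by direct computation in $A_6$ and in $\mathrm{P}\Gamma\mathrm{L}_2(9)$.
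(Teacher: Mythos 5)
Your plan is essentially the paper's proof: it exhibits explicit triples of types $(4,4,4)$ and $(5,5,5)$ (written as permutations on $10$ points, i.e.\ in the $PSL_2(9)$ model), gets the $\Sigma$-condition for free from the element orders, checks generation directly, and obtains strong reality from an automorphism lying in the exceptional $PGL_2(9)$ coset of $\mathrm{Aut}(A_6)$. The only cosmetic difference is that the paper sidesteps your ``coset-matching'' concern by producing a single involution $g$ with $\langle x_i,y_i,g\rangle=PGL_2(9)$ and $x_i^g=x_i^{-1}$, $y_i^g=y_i^{-1}$ for both $i=1,2$, so the two inverting automorphisms coincide.
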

When combined with the structures explicitly constructed in the proof of \cite[theorem 2]{FG} we have the following corollary.
\begin{cory}\label{cor}
The alternating group $A_n$ is a strongly real unmixed Beauville group if
and only if $n\geq6$.
\end{cory}
(Note that in \cite[theorem 2]{FG} Fuertes and Gonz\'{a}lez-Diez claim to prove the above result with the bound $n\geq6$ replaced with $n\geq7$ - an error that the above theorem explicitly corrects. Interestingly, this correction requires the use of the exceptional nature of $Aut(A_6)$, so is clearly very different to the $n\geq7$ cases. We further note that in \cite[theorem 3.1]{FJ} strongly real Beauville structures for the groups $PSL_2(q)$ are obtained. This appears to also correct the above error since $PSL_2(9)\cong A_6$, but it is only by delegating this case to the reader that they achieve this. Given that this strongly real Beauville structure can only be constructed by using an automorphism that is exceptional, regardless of whether this group is viewed as $PSL_2(9)$ or A$_6$, explicitly resolving this case is clearly desirable.) We conclude with a brief discussion of mixed Beauville structures of groups of the form $A_6:2^{(2)}$.





\section{Strongly Real Sporadic Beauville groups}\label{Main}

\subsection{Our Construction}\label{construction}

Roughly speaking, our method of showing that a group is a strongly real Beauville group, which in principal may be applied to any perfect group of even order that possesses a strongly real Beauville structure, is as follows. 

We recall the following from \cite{Bray}. Let $t,g\in G$ be such that $t$ is an involution.
\begin{itemize}
\item If $o(tt^g)=2r$ for some integer $r$ then $(tt^g)^r\in C_G(t)$.
\item If $o(tt^g)=2r+1$ for some integer $r$ then $g(tt^g)^r\in C_G(t)$.
\end{itemize}
Let $x_i:=tt^{g_i}$ for some elements $g_i\in G$ for $i=1,2$. The above observations makes it easy to find some element $u\in G$ that commutes with $t$ and does not normalize the subgroup $\langle x_1\rangle$. We can then define the elements $y_i:=(x_i^{j(i)})^u$ for $i=1,2$, the value of the integer $j(i)$ being chosen to make the product $x_iy_i$ `nice' (ie we ensure that the conditions of definition \ref{maindef} are satisfied and when necessary hopefully makes it easier to see from the subgroup structure of $G$ that $\langle x_i,y_i\rangle=G$). This gives a Beauville structure for $G$. Since $u$ and $t$ commute we also have that $x_i^t=x_i^{-1}$ and $y_i^t=y_i^{-1}$ thus the Beauville structure just constructed must be strongly real.


\subsection{The Structures}

In this section we describe and tabulate the Beauville structures that we construct here.

Standard generators for the sporadic groups are given on the online atlas \cite{onlineATLAS} and are named $a$ and $b$. In each case we have that $o(a)=2$, so where possible we use this element to define the automorphisms needed when constructing strongly real Beauville structures. For background information on standard generators more generally see the original article by Wilson \cite{WilsonStandardGenerators}.

The types of the Beauville structures we construct here are given in Table \ref{sporadic1}. The words used to define our Beauville structures are given in Table \ref{sporadic2}. We remark that whilst it is common to use lower case letters for the standard generators of a simple group and upper case letters for their covering groups. For the sake of aesthetics we use lower case letters in both cases, it being clear which are the non-simple cases.

In some cases it is either necessary or desirable to use an involution other than $a$ that we call $c$. The words in the standard generators used to define these elements $c$ are given in Table \ref{sporadic3}. In each case the fact that the given elements generate may be verified using either permutation or matrix representations of these groups available on \cite{onlineATLAS}, either directly or by the observations made in the next section.

\begin{table}
\begin{center}
\begin{tabular}{|c|c|c|c|}
\hline
$G$&Type&$G$&Type\\
\hline
J$_1$&((19,19,11),(15,15,7))&3\.{}O'N&((28,28,12),(19,19,19))\\
2\.{}M$_{12}$&((5,5,3),(11,11,11))&Co$_3$&((7,7,23),(5,5,24))\\
12\.{}M$_{22}$&((5,5,5),(12,12,6))&Co$_2$&((16,16,8),(11,11,7))\\
2\.{}J$_2$&((7,7,7),(12,12,8))&6\.{}Fi$_{22}$&((7,7,5),(13,13,13))\\
$^2$F$_4(2)'$&((5,5,5),(4,4,4))&HN&((5,5,5),(6,6,6))\\
2\.{}HS&((15,15,5),(8,8,7))&Ly&((67,67,40),(37,37,21))\\
3\.{}J$_3$&((17,17,19),(9,9,9))&Th&((19,19,19),(13,13,13))\\
M$_{24}$&((5,5,5),(6,6,11))&Fi$_{23}$&((5,5,5),(6,6,4))\\
3\.{}McL&((5,5,5),(6,6,6))&2\.{}Co$_1$&((5,5,5),(6,6,6))\\
He&((3,3,6),(17,17,17))&J$_4$&((43,43,11),(29,29,6))\\
2\.{}Ru&((4,4,10),(13,13,7))&3\.{}Fi$'_{24}$&((9,9,9),(11,11,26))\\
6\.{}Suz&((13,13,13),(12,12,10))&$\mathbb{B}$&((13,13,19),(12,12,20))\\
\hline
\end{tabular}
\end{center}
\medskip

\caption{The types of the Beauville structures defined by the words given in Tables \ref{sporadic2} and \ref{sporadic3}. See definition \ref{maindef}.} 
\label{sporadic1}
\end{table}

\begin{table}
\begin{center}
\begin{tabular}{|c|c|c|c|c|c|}
\hline
$G$&$x_1$&$x_2$&$u$&$j(1)$&$j(2)$\\
\hline
J$_1$&$aa^b$&$aa^{bab}$&$b(ab^2ab)^9$&8&10\\
\textbf{2\.{}M}$_{12}$&$aa^{(ba)^2b^2ab^2}$&$aa^{(bab)^2ab}$&$bab(a(bab)^{23}(ab)^2)^2$&4&3\\
12\.{}M$_{22}$&$aa^{bab^2}$&$aa^b$&$b^2ab(ab(b^2a)^3b)^2$&3&5\\
\textbf{2\.{}J}$_2$&$aa^{(b^2a)^2b^4}$&$aa^{(ba)^2b^4}$&$$$(a(ab^2)^{23}a^2b^2)^6$&1&3\\
$^2$F$_4(2)'$&$aa^{bab^2}$&$aa^{babab^2}$&$b(ab^2ab)^2$&1&1\\
\textbf{2\.{}HS}&$cc^{b^3a}$&$cc^{ab^3ab}$&$(cb^3cb^2)^6$&14&1\\
\textbf{3\.{}J}$_3$&$aa^{(ba)^3b}$&$aa^b$&$b(ab^2ab)^4$&3&1\\
M$_{24}$&$aa^{bab}$&$aa^{bab^2}$&$(ab^2ab)^6$&1&1\\
3\.{}McL&$aa^{(ba)^2b}$&$aa^{(ba)^2b^4}$&$b(ab^4ab)^2$&1&1\\
He&$cc^{b^3}$&$cc^{bab}$&$(cb^6cb)^2$&1&13\\
2\.{}Ru&$cc^{aba}$&$cc^{abab^3a}$&$a(ca^3ca)^3$&1&2\\
\textbf{6\.{}Suz}&$cc^{(ba)^7}$&$cc^{b}$&$(ca)^{20}$&6&1\\
3\.{}O'N&$aa^{bab}$&$aa^{b^2ab^2}$&$(ab^3ab)^6$&2&12\\
Co$_3$&$aa^{(bab)^3b^2}$&$aa^{b^2abab^2ab}$&$b(ab^3ab)^7$&5&1\\
Co$_2$&$cc^{b^2}$&$cc^{b^4ab^2ab}$&$(cb^4cb)^{15}$&1&3\\
6\.{}Fi$_{22}$&$cc^{bab^7}$&$cc^{b^2ab^4}$&$(cb^{12}cb)^{10}$&6&6\\
HN&$aa^b$&$aa^{babab}$&$b(ab^2ab)^2$&1&1\\
Ly&$aa^{(ba)^2b}$&$aa^{(ba)^2b^2ab}$&$b(ab^4ab)^4$&3&2\\
Th&$aa^{(ba)^2b^2}$&$aa^{(ba)^4b^2ab}$&$(ab^2ab)^5$&1&3\\
Fi$_{23}$&$aa^{b^2ab}$&$aa^{(ba)^3b^2}$&$b(ab^2ab)^2$&2&1\\
2\.{}Co$_1$&$cc^{(ab^2ab)^2a}$&$cc^{ab^2(ab)^2a}$&$b(cb^2cb)^2$&2&1\\
J$_4$&$cc^{ab^2ab^3ab}$&$cc^{(ab^3)^4}$&$b(cb^3cb)^{15}$&1&1\\
3\.{}Fi$'_{24}$&$cc^b$&$cc^{(ab)^3}$&$b(cb^2cb)^4$&1&5\\
$\mathbb{B}$&$aa^{(ba)^2b^2}$&$aa^{(ba)^2b^2ab}$&$(ab^2ab)^{20}$&2&1\\
\hline
\end{tabular}
\end{center}\medskip

\caption{Words in terms of the standard generators defining a strongly real Beauville structure for the full covering group of each of the sporadic simple groups considered here. In each case the elements $a$ and $b$ are the standard generators. In cases where the use of an element labeled $c$ is required, words in the standard generators defining these elements are given in Table \ref{sporadic3}. In some cases it was necessary/desirable to use the standard generators for $G:2$ rather than $G$. These cases we write in bold font.} 
\label{sporadic2}
\end{table}

\begin{table}
\begin{center}
\begin{tabular}{|c|c|c|c|}
\hline
$G$&$c$&$G$&$c$\\
\hline
2\.{}HS&$(bab^2ab^4a)^5$&6\.{}Fi$_{22}$&$(abab^{10})^6$\\
He&$(ab^3)^4$&2\.{}Co$_1$&$(ab)^{20}$\\
2\.{}Ru&$b^2$&J$_4$&$(abab^2)^5$\\
6\.{}Suz&$(bab^2(ab)^2)^{28}$&3\.{}Fi$_{24}'$&$((ab)^4b)^{18}$\\
Co$_2$&$(ab(ba)^2b((bab)^2(ab^2)^2)^2)^3$&&\\
\hline
\end{tabular}
\end{center}\medskip

\caption{Words in terms of the standard generators $a$ and $b$ defining an involution $c$ in cases the involution $a$ is unable to define a strongly real Beauville structure via the construction described in Section \ref{construction}.} 
\label{sporadic3}
\end{table}
\subsection{Homomorphic images}

So far we have only shown that the full covering group of each of the groups of part ($b$) of theorem \ref{maintheorem} are strongly real Beauville groups. In this subsection we consider the cases of the quasisimple sporadic groups with non-trivial centers and their homomorphic images.

Given a group $G$, it is tempting to look for a Beauville structure in the quotient $G/N$ by some normal
subgroup $N\vartriangleleft G$, and to try to lift this back to $G$. However, a triple that generates
$G/N$ need not lift back to a triple generating $G$, and even if it does, the condition (2) of definition \ref{maindef} may
not be satisfied. In this situation, the following two lemmata  are of great use (whilst the proofs of these results may seem trivial to the group theorist we include references to their proofs for the sake of the less group theoretically inclined reader).

\begin{lemma}
If $G$ is a perfect group, $N$ is a central subgroup of G, and $S$ is a subset of G
such that the image of $S$ in $G/N$ generates $G/N$, then $S$ generates $G$.
\end{lemma}

\begin{proof}
See \cite[lemma 4.1]{FJ}.
\end{proof}

\begin{lemma}
Let $G$ have generating triples $(x_i, y_i, z_i)$ with $x_iy_iz_i = 1$ for $i = 1, 2$, and a
normal subgroup $N$ such that at least one of these triples is faithfully represented in $G/N$.
If the images of these triples correspond to a Beauville structure for $G/N$, then these triples
correspond to a Beauville structure for $G$.
\end{lemma}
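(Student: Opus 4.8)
The plan is to reduce a Beauville structure for $G$ to one for the quotient $G/N$ by transporting the relevant conditions through the quotient map. Let $\pi:G\to G/N$ denote the natural projection. The triples $(x_i,y_i,z_i)$ are given to be generating triples for $G$ with $x_iy_iz_i=1$, so their images $(\pi(x_i),\pi(y_i),\pi(z_i))$ are automatically generating triples for $G/N$ satisfying $\pi(x_i)\pi(y_i)\pi(z_i)=1$. First I would note that conditions (1) and (2) of Definition \ref{maindef} for the images in $G/N$ are given by hypothesis (they are part of the assumption that the images correspond to a Beauville structure for $G/N$). Hence the only thing requiring work is condition (3), the disjointness of the sets $\Sigma$, pulled back up to $G$.

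Next I would make the elementary observation that $\pi$ maps $\Sigma(x_1,y_1,z_1)$ onto $\Sigma(\pi(x_1),\pi(y_1),\pi(z_1))$ and likewise for the second triple, simply because $\pi$ is a surjective homomorphism and so commutes with forming powers and conjugates: $\pi(gx_1^jg^{-1})=\pi(g)\pi(x_1)^j\pi(g)^{-1}$, and every element of $G/N$ is $\pi(g)$ for some $g$. The key step is then the following: suppose some $w\in G$ lies in $\Sigma(x_1,y_1,z_1)\cap\Sigma(x_2,y_2,z_2)$. Then $\pi(w)$ lies in $\Sigma(\pi(x_1),\pi(y_1),\pi(z_1))\cap\Sigma(\pi(x_2),\pi(y_2),\pi(z_2))$, which by the assumed Beauville structure on $G/N$ equals $\{e\}$; hence $w\in N$. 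So the intersection of the two $\Sigma$-sets in $G$ is contained in $N$. Now I would use the hypothesis that at least one of the triples, say the first, is faithfully represented in $G/N$: this means $N\cap\langle x_1,y_1,z_1\rangle$ is trivial, or more precisely that no nontrivial power of $x_1$, $y_1$ or $z_1$ (nor any conjugate thereof) lies in $N$ — equivalently $\pi$ restricted to each of $\langle x_1\rangle$, $\langle y_1\rangle$, $\langle z_1\rangle$ is injective. Since $w\in\Sigma(x_1,y_1,z_1)$, $w$ is conjugate to a power of one of $x_1,y_1,z_1$; as $w\in N$ and $N$ is normal, that power itself lies in $N$, and by faithfulness it must be the identity. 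Therefore $w=e$, and the two $\Sigma$-sets in $G$ meet only in $\{e\}$, which is condition (3).

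I would then assemble these pieces: conditions (1) and (2) descend trivially (indeed condition (2) depends only on the orders $o(x_i),o(y_i),o(z_i)$, and faithfulness of at least one triple plus the inequality for the images forces the inequality upstairs for that triple, while for the other triple the orders in $G$ are at least those in $G/N$ so the inequality is preserved), generation is given, and condition (3) follows from the argument above. Hence $(x_i,y_i,z_i)$, $i=1,2$, is a Beauville structure for $G$. The main obstacle — really the only subtle point — is handling condition (3) correctly: one must be careful that faithfulness of a \emph{single} triple in $G/N$ suffices, and the reason it does is exactly that the intersection of the $\Sigma$-sets was already squeezed into $N$ using the Beauville property of $G/N$ before the faithfulness hypothesis is invoked, so only one of the two triples needs to be controlled modulo $N$. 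I would cite \cite[Lemma 4.2]{FJ} (or the analogous statement in the Bauer--Catanese--Grunewald literature) for the bookkeeping, since the argument is standard.
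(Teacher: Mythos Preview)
Your argument is correct and is essentially the standard one; indeed you end by citing \cite[Lemma 4.2]{FJ}, which is exactly what the paper does --- the paper's entire proof is the sentence ``See \cite[lemma 4.2]{FJ}'', with no further details. So your proposal is strictly more informative than the paper's own treatment, and the route you take (push the intersection into $N$ via the quotient Beauville structure, then use faithfulness of one triple to kill it) is precisely the intended one.

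One small verbal slip worth fixing: you write that faithful representation of the first triple in $G/N$ ``means $N\cap\langle x_1,y_1,z_1\rangle$ is trivial''. Since $(x_1,y_1,z_1)$ is a \emph{generating} triple, $\langle x_1,y_1,z_1\rangle=G$ and that condition would force $N=\{e\}$, which is not what is meant. You immediately correct yourself to the right interpretation (that $\pi$ is injective on each of the cyclic groups $\langle x_1\rangle$, $\langle y_1\rangle$, $\langle z_1\rangle$, i.e.\ the orders of $x_1,y_1,z_1$ are unchanged in the quotient), and your subsequent argument uses only this correct version, so the proof itself is unaffected; just delete the erroneous first formulation.
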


\begin{proof}
See \cite[lemma 4.2]{FJ}.
\end{proof}

From the types of the Beauville structures obtained in the previous section and from the orders of the centers of the relevant groups it is clear that the above lemmata may be applied to the Beauville structures obtained in the previous section.

\section{Large Strongly Real Beauville Groups}

In this section we prove that the Beauville structures defined in Section \ref{Main} do indeed generate the groups claimed in the cases where the representations of the groups in question are too cumbersome for this to be verified directly. In doing so we complete the proof of part ($b$) of theorem \ref{maintheorem}. In each case it is taken for granted that the elements refered to from the previous section do indeed have the stated orders and we focus only on the question of generation in each case. Any direct calculation refered to in the below proofs may easily be performed in {\sc Magma} \cite{Magma} or GAP \cite{GAP}.

\begin{lemma}
The Harada-Norton group HN possosses a strongly real Beauville structure of type ((5,5,5),(6,6,6)).
\end{lemma}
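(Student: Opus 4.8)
The plan is to verify that the explicit words given in Table~\ref{sporadic2} for HN do indeed generate the whole group and satisfy the conditions of Definition~\ref{maindef}, thereby certifying the type $((5,5,5),(6,6,6))$ structure claimed in Table~\ref{sporadic1}. First I would fix the standard generators $a,b$ of HN (with $o(a)=2$) as on the online ATLAS~\cite{onlineATLAS}, and set $x_1:=aa^b$, $x_2:=aa^{babab}$, $u:=b(ab^2ab)^2$, and then $y_i:=(x_i^{j(i)})^u$ with $j(1)=j(2)=1$, so $y_1=x_1^u$ and $y_2=x_2^u$. Since $a$ is an involution and $u$ is (by the Bray trick recalled in Section~\ref{construction}) an element of $C_{\mathrm{HN}}(a)$, we automatically get $x_i^a=x_i^{-1}$ and $y_i^a=y_i^{-1}$, so the structure is strongly real provided it is a Beauville structure at all; the automorphism $\phi_i$ is just conjugation by $a$ for both $i$, which trivially differ by an inner automorphism.

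Next I would check the numerical/type conditions. One computes $o(x_1)=o(y_1)=5$, $o(x_1y_1)=5$ and $o(x_2)=o(y_2)=6$, $o(x_2y_2)=6$, so the two types are $(5,5,5)$ and $(6,6,6)$; condition (2) of Definition~\ref{maindef} holds since $3/5<1$ and $3/6<1$. For condition (3) I would appeal to the standard criterion that if $\gcd(|x_1y_1z_1\text{-type orders}|, |x_2y_2z_2\text{-type orders}|)$-type considerations fail, one examines conjugacy classes directly: here the element orders appearing in $\Sigma(x_1,y_1,z_1)$ are divisors of $5$ and those in $\Sigma(x_2,y_2,z_2)$ are divisors of $6$, so the two sets meet only in the identity. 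This is immediate once the orders above are confirmed.

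The substantive step is generation: one must show $\langle x_1,y_1\rangle=\mathrm{HN}$ and $\langle x_2,y_2\rangle=\mathrm{HN}$. For this I would use the list of maximal subgroups of HN (available in the ATLAS and in {\sc Magma}'s database). Since $x_i$ and $y_i$ are conjugate (via $u$), the subgroup $H_i=\langle x_i,y_i\rangle$ contains two conjugate elements of order $5$ (resp.\ $6$) whose product has order $5$ (resp.\ $6$); I would rule out each maximal subgroup by noting it either has order not divisible by the relevant primes to the required power, or does not contain a suitable pair — in practice one checks that no maximal subgroup of HN contains elements giving a $(5,5,5)$ (resp.\ $(6,6,6)$) triple in the right classes, or more directly one simply evaluates the words in a concrete matrix or permutation representation of HN (the $132$-dimensional representation over $\mathbb{F}_4$ or $\mathbb{F}_5$, say) and verifies the order of the group generated is $|\mathrm{HN}|=2^{14}\cdot 3^6\cdot 5^6\cdot 7\cdot 11\cdot 19$ using a randomized order/composition-series computation. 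The latter is the approach the following proofs take, and it is a routine {\sc Magma}/GAP calculation~\cite{Magma,GAP}; the only mild obstacle is confirming that the particular ATLAS conjugacy classes hit by $x_1y_1$ and $x_2y_2$ are such that $\Sigma$-intersection is trivial, but as noted this follows purely from the coprimality of the relevant element orders. Hence HN is a strongly real Beauville group of type $((5,5,5),(6,6,6))$, as claimed.
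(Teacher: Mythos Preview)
Your setup, the strong-reality verification via the Bray-type construction, the type check, and the disjointness argument for condition~(3) (coprimality of $5$ and $6$) are all correct and match the paper's framework. The point of departure is the generation argument.

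Your suggestion to rule out maximal subgroups by checking whether they admit $(5,5,5)$- or $(6,6,6)$-triples in the right classes will not work as stated: several maximal subgroups of HN (for instance the copy of $A_{12}$) certainly contain elements of orders $5$ and $6$ and support such triples, so the type data alone cannot exclude them. Your fallback of a direct order computation in the $132$-dimensional matrix representation is in principle valid, but the paper places HN among the groups whose representations are ``too cumbersome for this to be verified directly''. Instead, the paper's argument computes auxiliary words in the proposed generators---specifically $x_1y_1x_1^2y_1^3$, $x_2y_2^3x_2^2y_2^4$ (both of order $22$) and $x_1y_1x_1^3y_1^4$, $x_2y_2x_2^4x_2^5$ (both of order $25$)---and then observes from the {\sc Atlas} list that no proper subgroup of HN contains elements of order $22$ and of order $25$ simultaneously. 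This reduces the generation check to a handful of order computations rather than a full subgroup-order verification, which is both faster and cleaner for a group of this size.
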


\begin{proof}
From the list of maximal subgroups of HN, as listed in the {\sc Atlas} \cite[p.166]{ATLAS}, we see that no proper subgroup contains elements of order 22 and order 25. Direct computation shows that $o(x_1y_1x_1^2y_1^3)=o(x_2y_2^3x_2^2y_2^4)=22$ and $o(x_1y_1x_1^3y_1^4)=o(x_2y_2x_2^4x_2^5)=25$, hence $\langle x_1,y_1\rangle=\langle x_2,y_2\rangle=G$.
\end{proof}

\begin{lemma}
The Lyons group Ly possesses a strongly real Beauville structure of type ((67,67,40),(37,37,21)).
\end{lemma}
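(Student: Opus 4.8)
The plan is to follow exactly the template established in the proof of the Harada--Norton lemma, now applied to the group Ly with the words from Tables \ref{sporadic1}--\ref{sporadic3}. Recall that by the construction of Section \ref{construction} the elements $x_1 = aa^{(ba)^2b}$, $x_2 = aa^{(ba)^2b^2ab}$, together with $y_i = (x_i^{j(i)})^u$ where $u = b(ab^4ab)^4$, $j(1)=3$, $j(2)=2$, already satisfy $x_iy_iz_i=1$ with $z_i=(x_iy_i)^{-1}$, and the Beauville structure is automatically strongly real because $a$ is an involution commuting with $u$, so that $x_i^a=x_i^{-1}$ and $y_i^a=y_i^{-1}$. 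Thus the content of the lemma is entirely (i) verifying the claimed type $((67,67,40),(37,37,21))$ and condition (2) of Definition \ref{maindef}, (ii) checking the conjugacy-class condition (3), and (iii) showing that $\langle x_1,y_1\rangle = \langle x_2,y_2\rangle = \mathrm{Ly}$.

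For part (i), I would simply record that direct computation in \textsc{Magma} or GAP, using the standard generators of Ly from \cite{onlineATLAS}, confirms $o(x_1)=o(y_1)=67$, $o(z_1)=40$, $o(x_2)=o(y_2)=37$ and $o(z_2)=21$; then $1/67+1/67+1/40<1$ and $1/37+1/37+1/21<1$ are immediate, so condition (2) holds. For part (ii), the conjugacy-class intersection condition, I would exploit the arithmetic: the orders $\{67,40\}$ occurring in the first triple and $\{37,21\}$ occurring in the second are coprime in pairs across the two triples, and $67,37$ are prime, so $\Sigma(x_1,y_1,z_1)$ meets $\Sigma(x_2,y_2,z_2)$ only in powers that are forced to be trivial --- more precisely, any common element would have order dividing both $\mathrm{lcm}(67,40)$ and $\mathrm{lcm}(37,21)$, which are coprime, hence order $1$. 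This is the kind of routine numerical check that the paper performs implicitly for all its structures.

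The substantive step is (iii), generation, and this is the main obstacle since a $111$-dimensional (or large permutation) representation of Ly is too cumbersome to test membership directly. Following the HN argument, the plan is to consult the list of maximal subgroups of Ly in the \textsc{Atlas} \cite[p.~174]{ATLAS} and identify a pair of element orders such that no maximal subgroup contains elements of both orders; the candidates here are $67$ and $40$ for $\langle x_1,y_1\rangle$ and $37$ and $21$ for $\langle x_2,y_2\rangle$. One checks that the only maximal subgroups with order divisible by $67$ are the normalizers $67{:}22$, which have no element of order $40$ (indeed no element of order $5$ times $8$), so a subgroup containing elements of both orders $67$ and $40$ must be all of Ly; similarly the maximal subgroups containing an element of order $37$ are contained in $37{:}18$, which has no element of order $21$. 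It then suffices to exhibit, by direct word computation, single elements of the required orders inside each $\langle x_i,y_i\rangle$: compute some short word $w_1(x_1,y_1)$ of order $40$ and another of order $67$ (the latter being $x_1$ itself), and short words $w_2(x_2,y_2)$ of orders $37$ and $21$. Hence $\langle x_1,y_1\rangle=\langle x_2,y_2\rangle=\mathrm{Ly}$, completing the proof.
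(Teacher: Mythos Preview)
Your approach is essentially identical to the paper's: both arguments reduce generation to the \textsc{Atlas} maximal-subgroup list for $\mathrm{Ly}$, observing that the only maximal subgroup meeting an element of order $67$ (resp.\ $37$) is the Frobenius group $67{:}22$ (resp.\ $37{:}18$), which contains no element of order $40$ (resp.\ $21$). Your proposed final step of searching for auxiliary words of orders $40$ and $21$ is unnecessary, since these are already the orders of $x_1y_1$ and $x_2y_2$ by the stated type---unlike the HN case, here the generators themselves supply the required element orders directly.
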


\begin{proof}
From the list of maximal subgroups of Ly, as listed in the {\sc Atlas} \cite[p.174]{ATLAS}, we see that an element of order 67 is contained in only one maximal subgroup, a copy of the Frobenious group 67:22. This clearly contains no elements of order 40. Similarly we see that an element of order 37 is contained in only one maximal subgroup, a copy of the Frobenious group 37:18. Since this clearly contains no elements of order 21 we must have $\langle x_1,y_1\rangle=\langle x_2,y_2\rangle=G$.
\end{proof}

\begin{lemma}
The Thompson group Th possesses a strongly real Beauville structure of type ((19,19,19),(13,13,13)).
\end{lemma}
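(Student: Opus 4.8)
The plan is to follow the same template used for HN and Ly in the preceding lemmas: use the Beauville structure whose defining words are recorded in Tables~\ref{sporadic1}--\ref{sporadic3}, namely $x_1=aa^{(ba)^2b^2}$, $x_2=aa^{(ba)^4b^2ab}$, $u=(ab^2ab)^5$, $j(1)=1$ and $j(2)=3$, so that $y_i=(x_i^{j(i)})^u$. By the general construction of Section~\ref{construction} the two triples $(x_i,y_i,z_i)$ with $z_i=(x_iy_i)^{-1}$ automatically satisfy $x_i^a=x_i^{-1}$, $y_i^a=y_i^{-1}$ (since $u$ commutes with the involution $a$), so the structure is strongly real provided it is a genuine Beauville structure. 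One checks directly in {\sc Magma} or GAP that $x_1,y_1$ have order $19$ and $x_2,y_2$ have order $13$, giving type $((19,19,19),(13,13,13))$; condition~(2) of Definition~\ref{maindef} then holds since $3/19<1$ and $3/13<1$, and condition~(3) holds because $\gcd(19,13)=1$ forces $\Sigma(x_1,y_1,z_1)\cap\Sigma(x_2,y_2,z_2)=\{e\}$ as the two triples have coprime element orders. Thus the only substantive point to verify is $\langle x_1,y_1\rangle=\langle x_2,y_2\rangle=\mathrm{Th}$.

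For this I would argue exactly as in the HN and Ly cases, via the list of maximal subgroups of $\mathrm{Th}$ in the {\sc Atlas} \cite[p.177]{ATLAS}. The maximal subgroups of $\mathrm{Th}$ have orders that can be read off directly, and one inspects which of them have order divisible by $19$ and which by $13$. An element of order $19$ lies in very few maximal subgroups (in fact $19$ divides the order of only $L_2(19):2$ and $U_3(8):6$ among the maximals), while $13$ divides the order of only $^3D_4(2):3$ and $L_3(3)$. One then exhibits, by direct computation, an element of $\langle x_1,y_1\rangle$ whose order is not divisible by any prime power compatible with simultaneously lying in a subgroup containing an element of order $19$ — for instance an element of order $21$ or $28$, which no maximal subgroup of $\mathrm{Th}$ containing an element of order $19$ possesses. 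Since $\langle x_1,y_1\rangle$ contains an element of order $19$ and also such an incompatible element, no maximal subgroup can contain it, so $\langle x_1,y_1\rangle=\mathrm{Th}$; the identical strategy with the roles of $19$ and $13$ interchanged handles $\langle x_2,y_2\rangle$.

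The main obstacle — and it is a mild one — is the bookkeeping of finding, for each of the two subgroups, a single auxiliary element whose order simultaneously rules out all the (few) maximal subgroups containing an element of order $19$, respectively $13$. If no single order does the job one instead computes several element orders of $\langle x_i,y_i\rangle$ and intersects the constraints, or falls back on a permutation/matrix representation of $\mathrm{Th}$ from the online {\sc Atlas} \cite{onlineATLAS} and verifies generation by a direct order computation of the group generated; since $\mathrm{Th}$ has a reasonably small faithful representation (degree $248$ over $\mathbb{F}_3$, or a permutation representation on $143\,127\,000$ points) this is routine, if not instantaneous. In any case the argument requires nothing beyond the {\sc Atlas} data and a short machine computation, in complete parallel with the two lemmata immediately preceding.
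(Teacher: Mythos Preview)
Your approach follows the paper's template, but the execution has a genuine gap. First, the {\sc Atlas} list of maximal subgroups of $\mathrm{Th}$ on p.~177 is \emph{incomplete}; the paper explicitly flags this and cites Linton's classification \cite{Linton1,Linton2} instead, so an argument resting only on \cite[p.~177]{ATLAS} is not a proof. Second, your proposed discriminating orders do not actually work: $U_3(8)$ contains a cyclic maximal torus of order $(q^2-1)/3=21$, so $U_3(8){:}6$ has elements of order $19$ \emph{and} of order $21$, and an order-$21$ witness fails to exclude it. Your enumeration of maximals containing elements of order $13$ is also wrong --- you omit $(3\times G_2(3)){:}2$, and $|G_2(3)|=2^6\cdot3^6\cdot7\cdot13$.

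The paper avoids all of this by picking a single discriminating prime that handles both triples simultaneously: it computes $o(x_1y_1x_1^2y_1^4)=o(x_2y_2x_2^2y_2^{11})=31$ and observes, from the complete Linton list, that the only maximal subgroups of $\mathrm{Th}$ containing an element of order $31$ are $2^5.L_5(2)$ and the Frobenius group $31{:}15$, neither of which has elements of order $19$ or $13$. That is both cleaner and actually closes the argument; your fallback of verifying generation directly in a $248$-dimensional $\mathbb{F}_3$-representation would of course also work, but the point of these lemmata is precisely to certify generation without resorting to that.
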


\begin{proof}
From the list of maximal subgroups of Th, as listed in \cite{Linton1,Linton2} (note that list given in the {\sc Atlas} \cite[p.177]{ATLAS} is incomplete), we see that the only maximal subgroups containing elements of order 31 are isomorphic to either 2$^{5}$\.{}L$_5$(2) or the Frobenious group 31:15. These subgroups clearly contain no elements of order 19 or 13. Direct computation shows that $o(x_1y_1x_1^2y_1^4)=o(x_2y_2x_2^2y_2^{11})=31$, and so $\langle x_1,y_1\rangle=\langle x_2,y_2\rangle=G$.
\end{proof}

\begin{lemma} The Janko group J$_4$ possesses a strongly real Beauville structure of type ((43,43,11),(29,29,6)).
\end{lemma}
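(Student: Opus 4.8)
The plan is to follow exactly the template used for the preceding lemmas (HN, Ly, Th): identify a prime or product of orders that pins down the ambient group among the maximal subgroups of $J_4$, and then exhibit a short word in $x_i,y_i$ of that order. First I would recall from Section~\ref{construction} that the triple $(x_1,y_1,z_1)$ has type $(43,43,11)$ and $(x_2,y_2,z_2)$ has type $(29,29,6)$, with $x_i=cc^{g_i}$ for the involution $c=(abab^2)^5$ and the explicit words $g_1=ab^2ab^3ab$, $g_2=(ab^3)^4$ given in Tables~\ref{sporadic2} and~\ref{sporadic3}; since $c$ commutes with $u=b(cb^3cb)^{15}$ the structure is automatically strongly real, so the only thing to verify is that $\langle x_1,y_1\rangle=\langle x_2,y_2\rangle=J_4$.

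Next I would invoke the list of maximal subgroups of $J_4$ from the {\sc Atlas} \cite[p.190]{ATLAS}. The key observation is that $43$ divides $|J_4|$ but the only maximal subgroup of $J_4$ whose order is divisible by $43$ is the Frobenius group $43{:}14$; this group has no element of order $11$ (nor of order $29$, $6$, etc.). Dually, $29$ divides $|J_4|$ and the only maximal subgroup with order divisible by $29$ is the Frobenius group $29{:}28$, which contains no element of order $11$ or $6$ and in particular no element of order $43$. Hence any subgroup of $J_4$ containing an element of order $43$ and an element of order $11$ must be all of $J_4$, and likewise for orders $29$ and $6$ (or $29$ and $11$).

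It then remains to produce, by direct computation in {\sc Magma} \cite{Magma} or GAP \cite{GAP}, short words $w_1(x_1,y_1)$ and $w_2(x_2,y_2)$ with $o(w_1)=43$ and $o(w_2)=29$ --- for instance words of the shape $x_iy_ix_i^{a}y_i^{b}$ as in the Th lemma --- after which $\langle x_1,y_1\rangle$ contains an element of order $43$ and (being generated by $x_1$ of order $43$, so already containing one) an element of order $11$ via $z_1=(x_1y_1)^{-1}$, forcing $\langle x_1,y_1\rangle=J_4$; similarly for $i=2$ using $z_2$ of order $6$. The main obstacle is essentially bookkeeping rather than conceptual: one must double-check against the corrected maximal subgroup list that no \emph{other} maximal subgroup sneaks in a divisor of $43$ or $29$, and one must actually locate the short order-$43$ and order-$29$ words by machine search; neither presents any real theoretical difficulty once the representation of $J_4$ from \cite{onlineATLAS} is loaded.
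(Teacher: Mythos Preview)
Your proposal is essentially the paper's own argument: use the {\sc Atlas} list of maximal subgroups of $J_4$ to see that the only maximal subgroup containing elements of order $43$ is the Frobenius group $43{:}14$ (which has no element of order $11$), and the only one containing elements of order $29$ is $29{:}28$ (which has no element of order $6$), whence both pairs generate. The one redundancy in your write-up is the proposed machine search for words $w_1,w_2$ of orders $43$ and $29$: you already have them, since $x_1$ itself has order $43$ and $x_2$ has order $29$, while $z_1=(x_1y_1)^{-1}$ has order $11$ and $z_2=(x_2y_2)^{-1}$ has order $6$; no further computation is needed beyond what the type $((43,43,11),(29,29,6))$ already records.
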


\begin{proof}
From the list of maximal subgroups of J$_4$, as listed in the {\sc Atlas} \cite[p.190]{ATLAS}, we see that an element of order 43 is contained in only one maximal subgroup, a copy of the Frobenious group 43:14. This clearly contains no elements of order 11. Similarly we see that an element of order 29 is contained in only one maximal subgroup, a copy of the Frobenious group 29:28. Since this clearly contains no elements of order 6 we must have $\langle x_1,y_1\rangle=\langle x_2,y_2\rangle=G$.
\end{proof}

\begin{lemma}
The Baby Monster $\mathbb{B}$ possesses a strongly real Beauville structure of type ((13,13,19),(12,12,20))
\end{lemma}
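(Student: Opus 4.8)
The plan is to mimic exactly the method used for the previous few lemmas (HN, Ly, Th, J$_4$): namely, to exhibit two generating pairs $\langle x_1,y_1\rangle$ and $\langle x_2,y_2\rangle$ of the claimed orders, arising from the construction of Section~\ref{construction} with the standard generators $a,b$ of $\mathbb{B}$ as recorded in Tables~\ref{sporadic1} and~\ref{sporadic2}, and then to verify generation by an order-of-elements argument against the list of maximal subgroups in the {\sc Atlas} \cite[p.~208]{ATLAS}. Since $x_i=aa^{g_i}$ with $a$ the standard involution and $y_i=(x_i^{j(i)})^u$ for the element $u\in C_{\mathbb{B}}(a)$ given in Table~\ref{sporadic2}, the strong reality is automatic: conjugation by $a$ inverts both $x_i$ and $y_i$ simultaneously (as in Section~\ref{construction}), so nothing needs to be checked there beyond the orders.

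First I would confirm the type: one checks directly (in {\sc Magma} or GAP, using the online Atlas representation of $\mathbb{B}$) that $o(x_1)=o(y_1)=13$, $o(z_1)=o(x_1y_1)=19$, and $o(x_2)=o(y_2)=12$, $o(z_2)=o(x_2y_2)=20$. Condition~(2) of Definition~\ref{maindef} is then immediate since $1/13+1/13+1/19<1$ and $1/12+1/12+1/20<1$. Condition~(3), the intersection condition $\Sigma(x_1,y_1,z_1)\cap\Sigma(x_2,y_2,z_2)=\{e\}$, follows because the orders of $x_1,y_1,z_1$ are $\{13,19\}$ while those of $x_2,y_2,z_2$ are $\{12,20\}$, and these sets together with all their divisors greater than $1$ are disjoint ($13,19$ are prime and coprime to $12$ and $20$); hence no nontrivial power of an element of one triple can be conjugate to a power of an element of the other.

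The substantive step is showing $\langle x_1,y_1\rangle=\langle x_2,y_2\rangle=\mathbb{B}$. As in the Ly, Th and J$_4$ arguments, I would locate an element whose order is divisible by a prime (or prime power) that pins down the containing subgroup. In $\mathbb{B}$ the natural candidates are elements of order $47$ or $31$: from the {\sc Atlas} list of maximal subgroups of $\mathbb{B}$, an element of order $47$ lies in a unique maximal subgroup, the Frobenius group $47{:}23$, which contains no element of order $13$, $19$, $12$ or $20$; similarly one could use order $31$, or the subgroup $L_2(49){:}2$, to eliminate the remaining large-order possibilities. I would then exhibit short words $w_1(x_1,y_1)$ and $w_2(x_2,y_2)$ with $o(w_1)=o(w_2)=47$ (computed directly), forcing each pair to generate the whole group rather than any proper subgroup.

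The main obstacle is purely computational: $\mathbb{B}$ is enormous, so the ``direct computation'' of element orders and of the words $w_i$ must be carried out in a workable representation (the $4370$-dimensional representation over $\mathbb{F}_2$, or a large permutation representation, from \cite{onlineATLAS}), and verifying generation by brute force is infeasible — the whole point of routing through the maximal-subgroup list is to avoid that. A secondary subtlety is making sure the chosen word $w_i$ really does have order $47$ (rather than, say, order $46$), since with orders this large arithmetic slips are easy; I would double-check by confirming $w_i^{47}=e$ and $w_i^{47/p}\neq e$ for the relevant prime, and by checking that the putative generators indeed satisfy $x_iy_iz_i=1$ with the stated orders before appealing to the subgroup structure.
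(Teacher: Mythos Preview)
Your proposal is correct and matches the paper's proof almost exactly: the paper also routes generation through elements of order $47$, noting that the only maximal subgroup of $\mathbb{B}$ containing such elements is the Frobenius group $47{:}23$, and then exhibits the explicit words $x_1y_1^7x_1^3y_1^5$ and $x_2y_2x_2^3y_2^7$, both of order $47$. One small but genuine correction: the paper cites Wilson's \cite{Bmax} rather than the {\sc Atlas} for the maximal subgroups, explicitly warning that the {\sc Atlas} list \cite[p.~217]{ATLAS} is incomplete, so your appeal to the {\sc Atlas} alone would leave a logical gap.
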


\begin{proof}
From the list of maximal subgroups of $\mathbb{B}$, as listed in \cite{Bmax} (note that list given in the {\sc Atlas} \cite[p.217]{ATLAS} is incomplete), we see that the only maximal subgroup containing elements of order 47 is isomorphic the Frobenious group 47:23. This subgroup clearly contains no elements of order 13 or 12. Direct computation shows that $o(x_1y_1^7x_1^3y_1^5)=o(x_2y_2x_2^3y_2^7)=47$, and so $\langle x_1,y_1\rangle=\langle x_2,y_2\rangle=G$.
\end{proof}

\section{Non-Strongly Real Beauville Groups}\label{non}

In this short section we prove that the sporadic groups M$_{11}$ and M$_{23}$ are not strongly real Beauville groups. In doing so we complete the proof of theorem \ref{maintheorem}. Note that the strongly real classes of the sporadic simple group were classified by Suleiman in \cite{Sul}.


\begin{lemma}\label{m23}
The groups M$_{11}$ and M$_{23}$ are not strongly real Beauville groups.
\end{lemma}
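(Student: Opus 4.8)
The plan is to rule out strongly real Beauville structures for $M_{11}$ and $M_{23}$ by showing that these groups simply do not have enough strongly real conjugacy classes of elements of the right orders to build \emph{two} disjoint generating triples of the kind required by Definition~\ref{maindef}. The starting point is Suleiman's classification of strongly real classes in the sporadic simple groups \cite{Sul}: an element $z$ can appear in a strongly real Beauville structure only if its class (equivalently, the cyclic subgroup $\langle z\rangle$ up to conjugacy) is inverted by an automorphism, and moreover the relevant automorphism must be realisable simultaneously for $x_i$ and $y_i$ up to an inner twist. First I would simply list, from the {\sc Atlas} \cite{ATLAS} character tables and power maps, the conjugacy classes of $M_{11}$ and of $M_{23}$ together with their orders, and cross-reference with \cite{Sul} to mark which are strongly real. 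For $M_{11}$ the element orders are $1,2,3,4,5,6,8,8,11,11$, and the two classes of order $8$ (and the two of order $11$) are algebraically conjugate but \emph{not} real, being swapped by the outer-looking field automorphism; similarly $M_{23}$ has element orders $1,2,3,4,5,6,7,7,8,11,14,15,15,23,23$ with the pairs $7A/7B$, $11A/11B$(?), $15A/15B$, $23A/23B$ non-real. $\mathrm{Out}(M_{11})=\mathrm{Out}(M_{23})=1$, so ``strongly real'' here just means ``strongly real using inner automorphisms,'' i.e. the class is real and an inverting element can be found inside $G$.

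The key combinatorial step is condition (2) of Definition~\ref{maindef}: for a triple $(x,y,z)$ with $x y z = 1$ and orders $(l,m,n)$ we need $1/l+1/m+1/n<1$. This is a strong restriction. For $M_{11}$ the only element orders $\geq 3$ that are strongly real are (after consulting \cite{Sul}) a very short list — essentially just the classes $3A$, $4A$, $5A$, $6A$, $8A$/$8B$, $11A$/$11B$ with the orders-$8$ and orders-$11$ classes excluded for not being real — and one checks that any hyperbolic triple $(l,m,n)$ buildable from the remaining real orders already forces repeated use of the same small set of classes. Next I would observe that the two triples in a Beauville structure must have \emph{disjoint} cyclic structure in the sense of $\Sigma(x_1,y_1,z_1)\cap\Sigma(x_2,y_2,z_2)=\{e\}$: no element order may be shared, and in fact no power of any $x_i,y_i,z_i$ may be conjugate to a power of any $x_j,y_j,z_j$ across the two triples. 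Since the pool of admissible (real, hyperbolic-compatible) element orders in $M_{11}$ and $M_{23}$ is so meagre, a short finite case analysis over all ways to split the available orders into two hyperbolic triples with no shared (prime-power) orders shows there is no valid partition; in each attempted split either some triple fails $1/l+1/m+1/n<1$, or the two triples are forced to share an order, or one of the required orders is not strongly real. I would present this as a small table enumerating the candidate type-pairs and indicating for each exactly which of the three obstructions kills it.

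The main obstacle I anticipate is bookkeeping precision rather than any deep difficulty: I must be careful to use the \emph{power maps} correctly when checking the $\Sigma$-disjointness, since e.g. an element of order $6$ has powers of orders $2$ and $3$, so using a $6$-element in one triple ``spends'' the classes $2A$ and $3A$ and forbids their reuse (and reuse of $6A$) in the other triple; getting these inclusions among $\Sigma$-sets exactly right for $M_{23}$, which has more classes, is where an error could creep in. A secondary subtlety is that generation must also be checked — a triple of the right type might only generate a proper (maximal) subgroup — but this works in our favour: it only \emph{further} restricts the available triples, so for the negative result it suffices to rule things out on the orders/reality/$\Sigma$-disjointness grounds alone, and I need not analyse generation at all. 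Finally, I would note explicitly that for $M_{11}$ one can dispatch the argument extremely quickly: the only hyperbolic types available at all are built from $\{3,4,5,6,8,11\}$, the $8$- and $11$-classes are not real, and what remains cannot be split into two disjoint hyperbolic triples; $M_{23}$ requires the same reasoning carried out over its slightly longer class list.
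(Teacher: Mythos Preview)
Your opening reduction is correct and matches the paper: in both $M_{11}$ and $M_{23}$ the strongly real classes are exactly those of order at most $6$, and there is a unique class of each of the orders $2$, $3$, $4$, $5$ and $6$. But the central claim --- that ``a short finite case analysis over all ways to split the available orders into two hyperbolic triples with no shared (prime-power) orders shows there is no valid partition'' --- is false. The type-pair $((5,5,5),(4,4,4))$ passes every test you list: both triples are hyperbolic, all orders involved are strongly real, and the $\Sigma$-sets meet only in the identity (powers of a $5$-element lie in $\{1A,5A\}$, powers of a $4$-element in $\{1A,2A,4A\}$). The pair $((3,5,5),(4,4,4))$ survives as well, as does $((5,5,5),T)$ for any hyperbolic $T$ with entries from $\{2,3,4,6\}$. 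So your remark that generation ``works in our favour'' and ``need not be analysed'' is exactly backwards: it is precisely a (strong) generation obstruction that is required to finish.

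This is what the paper supplies. After the same combinatorial reduction --- one of the two triples must consist entirely of odd-order elements, hence has type $(3,3,5)$, $(5,5,3)$ or $(5,5,5)$ --- the paper invokes a computer check: neither group is strongly $(3,3,m)$-generated for any $m$, and each is strongly $(5,5,m)$-generated only for $m\in\{4,6\}$. That kills all three possible odd triples and completes the proof. To repair your argument you must either carry out (or cite) this strong-generation computation, or give a structural reason why no strongly real $(5,5,5)$- or $(5,5,3)$-triple can exist in these groups; the pure order/reality/$\Sigma$-bookkeeping you propose cannot do it on its own.
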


\begin{proof}
In both cases the only strongly real classes are classes of elements of order at most 6. In each group there is only one class of elements of order 2, one of order 3 and one of order 5. 


Computer calculations show that in each case, the group is only strongly $(5,5,m)$ generated if the integer $m$ is 4 or 6 and that neither group is strongly $(3,3,m)$ generated for any integer $m$.
\end{proof}

We remark that in \cite[p.35]{2} Bauer, Catanese and Grunewald state that they were unable to find a strongly real Beauville structure for M$_{11}$ (among other groups). The above lemma explains why.

\section{The Monster}
\begin{table}
\begin{center}
\begin{tabular}{|l|l|l|l|}
\hline
Element&Order&Element&Order\\
\hline
$p$&42&$s$&39\\
$qr$&19&$q^2r^2$&57\\
$qsr$&35&$qs^2r$&105\\
\hline
\end{tabular}
\end{center}\medskip
\caption{Some elements of $\mathbb{M}$ inverted by conjugation by $g$ and their orders.}
\label{monstertab1}

\end{table}
\begin{table}
\begin{center}
\begin{tabular}{|c|c|c|c|}
\hline
($x$,$y$,$xy$)&$(o(x),o(y),o(xy))$&($x$,$y$,$xy$)&$(o(x),o(y),o(xy))$\\
\hline
$(p,s,ps)$&(42,39,19)&$(qr,s,qrs)$&(19,39,22)\\
$(p^2,s,p^2s)$&(21,39,39)&$(qr,s^2,qrs^2)$&(19,39,66)\\
$(p^3,s,p^3s)$&(14,39,56)&$(p^2,qr,p^2qr)$&(21,19,60)\\
$(p,qr,pqr)$&(42,19,42)&$(p^2,s^2,p^2s^2)$&(21,39,55)\\
$(p,qsr,pqsr)$&(42,35,57)&$(qsr,s,qsrs)$&(35,39,105)\\
\hline
\end{tabular}
\end{center}\medskip
\caption{Some sets of elements of $\mathbb{M}$ that could potentially strongly $(a,b,c)$-generate the group.}
\label{monstertab2}
\end{table}

We give a brief discussion as to how a strongly real Beauville structure of the monster group $\mathbb{M}$ might be obtained.

In \cite{MonsterHur} Wilson proves that $\mathbb{M}$ can be generated by a pair of elements $g$ and $h$ such that $g$ is in class 2B, $h$ is in class 3B and $gh$ is in class 7B. In the process of proving this Wilson defines the following four elements
$$p=ghgh^2\mbox{, }q=ghghgh^2\mbox{, }r=ghgh^2gh^2\mbox{, }s=ghghgh^2gh^2.$$

Firstly, to apply our construction of Section 1.3 we need an involution of $\mathbb{M}$ - naturally we take the element $g$. 

The orders of several short words in the elements above are given in \cite[ Table 1]{MonsterHur}. In particular we have that $o(p)=42$. Now, for our Bray-type element, $u$, observe that $p^g=p^{-1}$ and so $g\not=p^{21}\in Z(\langle p,g\rangle)$. Whilst other short words in the above elements, such as those appearing in Table \ref{monstertab1}, are inverted by conjugation by $g$, these words often have odd order and so there is no guarantee that the involution produced will be distinct from $g$.

For our elements $x_i$, $i=1,2$ (which immediately give us the elements $y_i:=x_i^{u}$) we note that several of the words given in \cite[ Table 1]{MonsterHur} are inverted by conjugation by $g$, such as those given in Table \ref{monstertab1} and their powers, and any one of these provide candidates for our $x_i$s.

A slightly different approach is as following. In several cases, the products of elements found in Table \ref{monstertab1} also have their orders listed in \cite[ Table 1]{MonsterHur}. We can thus define at least one of our (potential) generating pairs by taking these elements themselves. We list a few of these possibilities in Table \ref{monstertab2}. 

We remark that proving that a proposed generating set $\mathbb{M}$ does in fact generate is easier than it first appears. Whilst the maximal subgroups of $\mathbb{M}$ have yet to be classified, a substantial amount of information is known. In particular, a complete classification of the maximal subgroups that contain elements of class 2A is known - see \cite{NW}. An immediate corollary of this classification is that the only maximal subgroups of $\mathbb{M}$ containing elements of order 94 are copies of 2\.{}$\mathbb{B}$. Finding a word in our set of proposed generators of order 94 forces the set to be contained in some copies of 2\.{}$\mathbb{B}$ and another word in our set of proposed generators that cannot lie in such a subgroup proves that the set generates. This is precisely how Wilson showed in \cite{MonsterHur} that the above $g$ and $h$ generate $\mathbb{M}$ - it turns out that $o(ppqsrpsrqsq)=94$ and $o(ppqsrqqrprq)=41$. 

Whilst multiplying elements of $\mathbb{M}$ together is extremely difficult, computing the order of such a word is somewhat easier - the method described in \cite{LPWW}, computing orders by analyzing orbits of specially chosen vectors in the natural 196882 dimensional $\mathbb{F}_2$ module, being the method used to calculate the orders given above.

\section{Mixed Beauville structures}

In this short section we consider the mixed case and prove theorem \ref{unmixed}. Recall that a Beauville structure is mixed if the action of the group interchanges the two curves being used to define the surface. As with unmixed Beauville structures, the concept of a mixed Beauville structure can be `internalised' to the group.

\begin{definition}\label{mixeddef}
Let $G$ be a group. A \emph{mixed Beauville structure} of $G$ is a set $\{x,y,z\}\subset G$ that generates an index 2 subgroup $G_0<G$ such that for every $g\in G\setminus G_0$ we have
\begin{enumerate}
\item $xyz=1$;
\item $\Sigma(x,y,z)\cap\Sigma(x^g,y^g,z^g)=\{1\}$ and
\item $g^2\notin\Sigma(x,y,z)$ for $i=1,2$.
\end{enumerate}
\end{definition}

Clearly no simple group can possess an mixed Beauville structure, however this doesn't rule out the possibility of an almost simple group possessing one. Few examples of mixed Beauille sructures are known \cite{Barker} so finding more is of great interest.

The following easy lemma is extremely useful.

\begin{lemma}\label{mixed}
Let $(\mathcal{C}\times\mathcal{C})/G$ be a Beauville surface of mixed type and $G_0$ the subgroup of $G$
consisting of the elements which preserve each of the factors, then the order of any element in
$G \setminus G_0$ is divisible by 4.
\end{lemma}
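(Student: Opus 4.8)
The plan is to exploit the freeness of the $G$-action on $\mathcal{C} \times \mathcal{C}$ together with the structure of the two factor-swapping elements. Let $g \in G \setminus G_0$. Since $[G:G_0] = 2$, the element $g$ swaps the two copies of $\mathcal{C}$, while $g^2 \in G_0$ preserves each factor. The key point is that $g$ acts on $\mathcal{C} \times \mathcal{C}$ by a rule of the shape $(P,Q) \mapsto (\sigma(Q), \tau(P))$ for suitable holomorphic maps $\sigma, \tau$ determined by $g$, so that $g^2$ acts as $(P,Q) \mapsto (\sigma\tau(P), \tau\sigma(Q))$.

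\textbf{First} I would record that $g$ has even order: since $g \notin G_0$ but $g^2 \in G_0$, and $G_0$ is the (index-two) subgroup, $g$ cannot have odd order, so $o(g) = 2k$ for some $k \ge 1$. \textbf{Next}, I would compute $g^k$. If $k$ is such that $g^k$ still lies outside $G_0$ we get a contradiction with $g^k$ being (a power equal to) an element whose square is central-ish; more precisely, the honest argument is: suppose for contradiction that $o(g) \equiv 2 \pmod 4$, say $o(g) = 2k$ with $k$ odd. Then $g^k$ is an involution lying in $G \setminus G_0$ (as $k$ is odd and $g \notin G_0$). \textbf{Then} I would show that an involution in $G \setminus G_0$ must have a fixed point on $\mathcal{C} \times \mathcal{C}$, contradicting the freeness of the action in condition (a) of the definition of a Beauville surface of unmixed/mixed type. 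Indeed, an involution $\iota \in G \setminus G_0$ acts as $(P,Q) \mapsto (\phi(Q), \phi^{-1}(P))$ for some biholomorphism $\phi: \mathcal{C} \to \mathcal{C}$ (the two ``off-diagonal'' maps must be mutually inverse for $\iota^2 = 1$); such a map always has a fixed point, namely $(P_0, \phi^{-1}(P_0))$ for any $P_0$, equivalently the ``twisted diagonal'' $\{(\phi(Q), Q)\}$ is pointwise fixed. This is the contradiction, so $o(g)$ cannot be $\equiv 2 \pmod 4$, hence $4 \mid o(g)$.

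\textbf{The main obstacle} I anticipate is pinning down precisely the form of the action of elements of $G \setminus G_0$ on $\mathcal{C} \times \mathcal{C}$ — i.e.\ justifying that a factor-swapping element really does act by an off-diagonal pair $(\phi, \psi)$ of biholomorphisms and that the involution condition forces $\psi = \phi^{-1}$. This is where the ``diagonal action'' hypothesis in the definition of the surface is doing the work, and it needs to be stated carefully; once that normal form is in hand, the fixed-point argument is immediate and the divisibility conclusion follows. An alternative, more algebraic route that sidesteps the geometry is to work with the mixed Beauville \emph{structure} directly: one shows that if $o(g) = 2k$ with $k$ odd then $g^k$ is an involution outside $G_0$, and condition (3) of Definition \ref{mixeddef} applied with $g^k$ gives $(g^k)^2 = 1 \notin \Sigma(x,y,z)$ trivially — so instead one uses that $g^k$ conjugates the structure to itself in a way forcing a nontrivial intersection in condition (2); however, the cleanest and most standard argument is the geometric fixed-point one above, so that is the route I would write up.
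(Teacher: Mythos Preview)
Your geometric argument is correct and is precisely the standard one: reduce to an involution $g^k$ outside $G_0$, observe it acts as $(P,Q)\mapsto(\phi(Q),\phi^{-1}(P))$, and note the twisted diagonal $\{(\phi(Q),Q)\}$ is fixed, contradicting freeness. The paper itself gives no proof at all --- it simply cites \cite[lemma~5]{FG} --- and the argument you have written is exactly the one found there (going back to Catanese), so there is nothing to compare. Your caveat about the algebraic alternative via condition~(2) of Definition~\ref{mixeddef} is well judged: that route does not go through cleanly, and you are right to stick with the fixed-point argument.
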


\begin{proof}
See \cite[lemma 5]{FG}.
\end{proof}

Of the 27 sporadic groups thirteen of them (namely M$_{12}$, M$_{22}$, J$_2$, $^2$F$_4(2)'$, HS, J$_3$, McL, He, Suz, O'N, Fi$_{22}$, HN and Fi$'_{24}$) posses outer automorphisms. From their character tables, which can be reconstructed from the data given in \cite{ATLAS}, we see that, apart from the Tits group $^2$F$_4(2)'$, all of the almost simple groups whose derived subgroup is in the above list have involutions lying outside $G_0$ and so by the above lemma none of these groups can possess a mixed Beauville structure. 

The case of the almost simple Tits group $^2$F$_4(2)$ is more delicate. In this case we see from the character table \cite[p.75]{ATLAS} that every element that is outside the simple group has order divisible by 4 and so lemma \ref{mixed} cannot be used to block the existence of a mixed Beauville structure. We can, however, also see the following. Condition 3 of definition \ref{mixeddef} forces the orders $x$, $y$ and $z$ to be odd since every involution of $G$ has the property that there is an element of order 4 in $G\setminus G_0$ that squares to it. The only elements of odd order have order 3, 5 or 13 and in each case there is only one class of cyclic subgroups of that order making it impossible to satisfy condition 2 of definition \ref{mixeddef}. The group $^2$F$_4(2)$ thus has no mixed Beauville structure, proving theorem \ref{unmixed}.

\section{The Alternating Groups}

In this final section we prove theorem \ref{alt} and corollary \ref{cor}. To do this we first recall some standard facts about automorphisms of alternating groups.

If $n\not=2,3$ or 6 then $Aut(A_n)\cong S_n$, the full symmetric group. (If $n=2,3$ then $Aut(A_n)\cong S_{n-1}$.) If $n=6$ then we have that $S_6$ is an index 2 subgroup of $Aut(A_6)$ which has structure $A_6:2^2$. An immediate consequence of this fact is the result that $Aut(A_6)$ has three index 2 subgroups, each of structure $A_6:2$. One is isomorphic to the linear group $PGL_2(9)$ (the exceptional isomorphism $A_6\cong PSL_2(9)$ gives us the fact that $Aut(A_6)\cong P\Gamma L_2(9)$); another to $S_6\cong P\Sigma L_2(9)$ and the final one to the Mathieu group $M_{10}$.

\begin{proof}\emph{of theorem \ref{alt}}.
Consider the permutations
\[x_1:=(2,9,5,6)(3,4,7,8)\mbox{,  }y_1:=(1,3,8,5)(2,6,10,4),\]
\[x_2:=(1,9,4,6,2)(3,5,7,10,8)\mbox{,  }y_2:=(1,3,2,5,7)(4,8,6,10,9),\]
and
\[g:=(1,10)(2,8)(3,6)(4,5)(7,9).\]

Easy calculation gives $o(x_1)=o(y_1)=o(x_1y_1)=4$ and $o(x_2)=o(y_2)=o(x_2y_2)=5$. Easy computations further show that $\langle x_1,y_1\rangle=\langle x_2,y_2\rangle=A_6$. From their orders it is clear that these elements also satisfy conditions 2 and 3 of definition \ref{maindef} and so these permutations define a Beauville structure for $A_6$ of type ((4,4,4),(5,5,5)). We claim that this Beauville structure is strongly real.

Easy computations show that $\langle x_1,y_1,g\rangle=\langle x_2,y_2,g\rangle=PGL_2(9)$, one of the groups of the form $A_6:2$ not isomorphic to the symmetric group $S_6$ (or the Mathieu group $M_{10}$). Further direct calculation reveals that $x_i^g=x_i^{-1}$ and $y_i^g=y_i^{-1}$ for $i=1,2$ and so this (outer) automorphism of $A_6$ shows that this Beauville structure is strongly real.
\end{proof}

\begin{proof}\emph{of corollary \ref{cor}}.
For $n\geq7$ these are explicitly constructed in the proof of \cite[theorem 2]{FG}. If $n=6$ this is the above theorem. If $n\leq5$ then it is easily verified that $A_n$ does not even possess a Beauville structure let alone a strongly real one.
\end{proof}

In \cite{FG} Fuertes and Gonz\'{a}lez-Diez use lemma \ref{mixed} to show that $S_6$ does not possess a mixed Beauville structure. In the case of $PGL_2(9)$ there are involutions lying outside the derived subgroup and so this same lemma ensures that $PGL_2(9)$ also does not possess a mixed Beauville structure. Remarkably, in the case of the group $M_{10}$ the only elements lying outside the derived subgroup all have order 4 or 8, so lemma \ref{mixed} is of no use here. 

In this case, however, we can say the following. Since $M_{10}$ has only one class of involutions it must be the case that, as in the case of $^2$F$_4(2)$, the elements defining a mixed Beauville structure must have odd order (ie order 3 or 5) by condition 3 of definition \ref{mixeddef}. Again, there is only one class of cyclic subgroups of each order, so Condition 2 of definiton \ref{mixeddef} cannot be satisfied, so there is no mixed Beauville structure in this case. The group $P\Gamma L_2(9)$ also cannot have an mixed Beauville structure since for each of the index 2 subgroups there is a class of involutions lying outside the subgroup blocking the existence of a mixed Beauville structure by lemma \ref{mixed}. It follows that no group of the form $A_6:2^{(2)}$ possesses a mixed Beaville structure.

\section{Acknowledgements} 
The author wishes to express his deepest gratitudes to Professor Gareth Jones for first introducing him to the concept of `all things Beauville'; to Dr John Bradley and Professor Christopher Parker for providing many helpful comments and corrections to early versions of this paper.

\end{document}